\numberwithin{equation}{section}
\def\blue{\textcolor{blue}}
\def\red{\textcolor{red}}
\theoremstyle{plain}
\newtheorem{theorem}{Theorem}[section]
\newtheorem{lemma}[theorem]{Lemma}
\newtheorem{corollary}[theorem]{Corollary}
\newtheorem{proposition}[theorem]{Proposition}
\newtheorem{Fact}[theorem]{Fact}
\theoremstyle{definition}
\newtheorem{Def}[theorem]{Definition}
\newtheorem{example}[theorem]{Example}
\newtheorem{conj}[theorem]{Conjecture}
\newtheorem{remark}[theorem]{Remark}
\renewcommand{\S}{\mathfrak{S}}
\def\A{\mathfrak{A}}
\def\des{\mathrm{des}}
\def\exc{\mathrm{exc}}
\def\wexc{\mathrm{wexc}}
\def\iexc{\mathrm{iexc}}
\def\fix{\mathrm{fix}}
\def\Fix{\mathrm{Fix}}
\def\per{\mathrm{per}}
\def\sgn{\mathrm{sgn}}
\def\DES{\mathrm{DES}}
\def\exph{\mathrm{exph}}
\def\Dasc{\mathrm{Dasc}}
\def\Ddes{\mathrm{Ddes}}
\def\grn{\mathrm{grn}}
\def\sign{{\rm sign}}
\def\Dest{\mathrm{Dest}}
\def\Exct{\mathrm{Exct}}
\def\Lag{\mathfrak{L}}
\def\pp{\operatorname{(2--13)}}
\def\Peak{\mathrm{Peak}}
\def\Val{\mathrm{Val}}
\def\Orb{\mathrm{Orb}}
\def\Im{\mathrm{Im}}
\newcommand{\N}{\mathbb{N}}
\newcommand{\Z}{\mathbb{Z}}
\newcommand{\D}{\mathfrak{D}}
\def\l{\left}
\def\r{\right}
\def\bg{\bigg}
\def\({\bg(}
\def\){\bg)}
\def\t{\text}
\def\f{\frac}
\def\per{{\rm per}}
\def\ls{\leqslant}
\def\gs{\geqslant}
\def\eq{\equiv}
\begin{document}
\hbox{Polished version for publication}
\medskip

\title{Permanent identities, combinatorial sequences, and permutation statistics}

\author[S. Fu]{Shishuo Fu}
\address[Shishuo Fu]{College of Mathematics and Statistics, Chongqing University, Huxi campus, Chongqing 401331, P.R. China}
\email{fsshuo@cqu.edu.cn}

\author[Z. Lin]{Zhicong Lin}
\address[Zhicong Lin]{Research Center for Mathematics and Interdisciplinary Sciences, Shandong University \& Frontiers Science Center for Nonlinear Expectations, Ministry of Education, Qingdao 266237, P.R. China}
\email{linz@sdu.edu.cn}

\author[Z.-W. Sun]{Zhi-Wei Sun}
\address[Zhi-Wei Sun, corresponding author]{School of Mathematics, Nanjing University, Nanjing 210093, P.R. China}
\email{zwsun@nju.edu.cn}

%\date{\today}

\begin{abstract}
In this paper, we confirm six conjectures on the exact values of some permanents, relating them to the Genocchi numbers of the first and second kinds as well as the Euler numbers. For example, we prove that
$$\mathrm{per}\left[\left\lfloor\frac{2j-k}{n}\right\rfloor\right]_{1\le j,k\le n}=2(2^{n+1}-1)B_{n+1},$$
where $B_0,B_1,B_2,\ldots$ are the Bernoulli numbers.
We also show that
$$
\per\left[\sgn\left(\cos\pi\frac{i+j}{n+1}\right)\right]_{1\le i,j\le n}=\begin{cases}
-\sum_{k=0}^m\binom{m}{k}E_{2k+1}&\quad\text{if $n=2m+1$},\\
\sum_{k=0}^m\binom{m}{k}E_{2k}&\quad\text{if $n=2m$},
\end{cases}
$$
where $\sgn(x)$ is the sign function, and $E_0,E_1,E_2,\ldots$ are the Euler (zigzag) numbers.

 In the course of linking the evaluation of these permanents to the aforementioned combinatorial sequences, the classical permutation statistic -- the excedance number, together with several kinds of its variants, plays a central role. Our approach features recurrence relations, bijections, as well as certain elementary operations on matrices that preserve their permanents. Moreover, our proof of the second permanent identity leads to  a proof of Bala's conjectural continued fraction formula, and an unexpected permutation interpretation for the $\gamma$-coefficients of the $2$-Eulerian polynomials.

\end{abstract}

\keywords{Permanents, permutation statistics, Bernoulli and Euler numbers, bijective combinatorics, Kreweras' triangle.
\newline \indent 2020 {\it Mathematics Subject Classification}. Primary 05A05, 15A15; Secondary 05A15, 05A19, 05E18, 11B68.}

\maketitle
\section{Introduction}

For a matrix $A=[a_{i,j}]_{1\le i,j\le n}$ over a commutative ring with identity, its determinant and permanent are defined by
$$\det(A)=\det[a_{i,j}]_{1\le i,j\le n}=\sum_{\pi\in\S_n}\sign(\pi)\prod_{i=1}^n a_{i,\pi(i)}$$
and
$$\per(A)=\per[a_{i,j}]_{1\le i,j\le n}=\sum_{\pi\in\S_n}\prod_{i=1}^n a_{i,\pi(i)}$$
respectively, where $\S_n$ is the symmetric group of all permutations of $[n]:=\{1,\ldots,n\}$.
Determinants are widely used in mathematics, and permanents are useful in combinatorics.

It is well known that
$$\det[i^{j-1}]_{1\le i,j\le n}=\prod_{1\le i<j\le n}(j-i)=1!2!\cdots(n-1)!$$
as this is of Vandermonde's type. In contrast, Z.-W. Sun \cite{S18} proved that
$$\per[i^{j-1}]_{1\le i,j\le n}\eq0\pmod n\quad\t{for all}\ n\ge 3.$$

Z.-W. Sun \cite{S19} evaluated some determinants involving trigonometric functions.
For example, he proved that for any odd integer $n>1$ and integers $a$ and $b$ with $\gcd(ab,n)=1$, the following identity holds:
$$\det\l[\tan\pi\f{aj+bk}n\r]_{1\ls j,k\ls n-1}=\l(\f{-ab}n\r)n^{n-2},$$
where $(\f{\cdot}n)$ is the Jacobi symbol.

Motivated by the above work, Sun \cite{Sun} investigated arithmetic properties of some permanents.
For example, he showed that
\begin{equation}\label{(j+k)/n}
\per\l[\l\lfloor\f{j+k}n\r\rfloor\r]_{1\ls j,k\ls n}=2^{n-1}+1
\ \ \t{and}\ \ \per\l[\l\lfloor\f{j+k-1}n\r\rfloor\r]_{1\ls j,k\ls n}=1,
\end{equation}
where $\lfloor\cdot\rfloor$ is the floor function. Sun \cite{Sun} also proved that
$$\per\l[\tan\pi\f{j+k}p\r]_{1\le j,k\le p-1}\eq(-1)^{(p+1)/2}2p\pmod{p^2}$$
for any odd prime $p$.

For any positive integer $n$, clearly
$$\l\lfloor\f{2j-k}n\r\rfloor\in\{0,\pm1\}\quad\t{for all}\ j,k=1,\ldots,n.$$
Inspired by this as well as his preprint \cite{Sun},
Sun \cite{Sun-quest} posed the following novel conjecture involving the Bernoulli numbers $B_0,B_1,\ldots$ given by
$$\frac{x}{e^x-1}=\sum_{n=0}^{\infty}B_n\frac{x^n}{n!}\quad (|x|<2\pi).$$

\begin{conj}[Z.-W. Sun]\label{conj:Sun}
For any positive integer $n$, we have
\begin{align}
\per\left[\left\lfloor\frac{2j-k}{n}\right\rfloor\right]_{1\le j,k\le n}=2(2^{n+1}-1)B_{n+1}.
\end{align}
\end{conj}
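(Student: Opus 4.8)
\emph{Overview.} The plan is to strip the floor functions down to indicator functions, use this to write the permanent as a signed sum of permanents of $0/1$ matrices, show that all but one term vanishes, and finally identify the surviving $0/1$ permanent with a Genocchi number, which then converts to the asserted Bernoulli expression.

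\emph{Step 1: rewriting the entries and expanding.} For $1\le j,k\le n$ one has $2-n\le 2j-k\le 2n-1$, so $\lfloor(2j-k)/n\rfloor\in\{-1,0,1\}$, and in fact
$$\l\lfloor\f{2j-k}n\r\rfloor=[\,k\le 2j-n\,]-[\,k\ge 2j+1\,],$$
where $[\,\cdot\,]$ is the Iverson bracket (the two brackets are never $1$ simultaneously). Substituting this into $\per(A)=\sum_{\pi\in\S_n}\prod_{j}a_{j,\pi(j)}$ and expanding the product over $j$ yields
$$\per\l[\l\lfloor\f{2j-k}n\r\rfloor\r]_{1\le j,k\le n}=\sum_{T\se[n]}(-1)^{|T|}\,\per(M^T),$$
where $M^T$ is the $0/1$ matrix whose $j$-th row is the indicator vector of $\{k:k\ge 2j+1\}$ if $j\in T$ and of $\{k:k\le 2j-n\}$ if $j\notin T$.

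\emph{Step 2: killing all but one term.} Row $j$ of $M^T$ is identically $0$ unless ($j\in T$ and $2j<n$) or ($j\notin T$ and $2j>n$); hence $\per(M^T)=0$ unless $\{j:2j\le n\}\se T\se\{j:2j\le n-1\}$. For $n$ even this is impossible (the left set strictly contains the right one); indeed the $(n/2)$-th row of the original matrix $A$ is already identically $0$ because $0\le 2(n/2)-k\le n-1$ for $1\le k\le n$, so $\per(A)=0$, which agrees with $2(2^{n+1}-1)B_{n+1}=0$ since $B_{n+1}=0$ for even $n\ge 2$. For $n=2m-1$ the only admissible set is $T=\{1,\dots,m-1\}$, leaving
$$\per\l[\l\lfloor\f{2j-k}n\r\rfloor\r]_{1\le j,k\le n}=(-1)^{m-1}\per(\Delta_m),$$
where $\Delta_m$ is the $(2m-1)\times(2m-1)$ matrix with a $1$ in row $r$, column $c$ exactly when either $1\le r\le m-1$ and $2r+1\le c\le 2m-1$, or $r=m-1+i$ with $1\le i\le m$ and $1\le c\le 2i-1$. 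Equivalently, $\per(\Delta_m)$ counts the permutations $\pi\in\S_{2m-1}$ with $\pi(r)\ge 2r+1$ for $1\le r\le m-1$ and $\pi(m-1+i)\le 2i-1$ for $1\le i\le m$.

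\emph{Step 3 (the crux): $\per(\Delta_m)=|G_{2m}|$}, where $G_0,G_1,\dots$ are the Genocchi numbers of the first kind, $\frac{2t}{e^t+1}=\sum_{n\ge 0}G_n\frac{t^n}{n!}$ (so $|G_2|=|G_4|=1$, $|G_6|=3$, $|G_8|=17,\dots$). This is where I expect the real difficulty to lie. The two staircases composing $\Delta_m$ force $\pi(m-1)=2m-1$ and $\pi(m)=1$ in every rook placement, and deleting these two rows and columns leaves a board of the same ``double staircase'' shape but with shifted parameters; iterating this peeling should set up a bijection between the placements counted by $\per(\Delta_m)$ and the Dumont permutations of the first kind of $[2m-2]$ (permutations $\sigma$ with $\sigma(2i-1)>2i-1$ and $\sigma(2i)\le 2i$), which are classically enumerated by $|G_{2m}|$. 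A more mechanical alternative is to introduce the two-parameter family of permanents of the truncated boards produced by the peeling and to check that it obeys the Gandhi-type recurrence (and initial conditions) that characterise the Genocchi numbers, or equivalently to biject the placements with Dellac-type configurations; either way this combinatorial identification carries the essential content.

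\emph{Step 4: back to Bernoulli numbers.} From $\frac1{e^x+1}=\frac1{e^x-1}-\frac2{e^{2x}-1}$ one obtains $\frac{2x}{e^x+1}=2\sum_{n\ge 0}(1-2^n)B_n\frac{x^n}{n!}$, i.e.\ $G_n=2(1-2^n)B_n$, and in particular $G_{2m}=(-1)^m|G_{2m}|$. Hence for $n=2m-1$,
$$(-1)^{m-1}|G_{2m}|=-G_{2m}=-2(1-2^{2m})B_{2m}=2(2^{2m}-1)B_{2m}=2(2^{n+1}-1)B_{n+1},$$
and combined with the even case of Step 2 this gives the claimed formula for all $n$. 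The main obstacle is Step 3; Steps 1, 2 and 4 are routine once the right indicator-function identity is in hand.
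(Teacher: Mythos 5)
Your Steps 1, 2 and 4 are correct, and the Iverson-bracket expansion
$\lfloor(2j-k)/n\rfloor=[k\le 2j-n]-[k\ge 2j+1]$ is a clean way to dispose of the $\pm1$ entries and the even case at once; it arrives at essentially the same reduced $0/1$ permanent as the paper (after one peeling of the two singleton rows of $\Delta_m$, your board becomes exactly the paper's reduced matrix $\tilde{L}_{2m-3}$), and your sign bookkeeping in Step 4 is right. The problem is that Step 3 --- the identity $\per(\Delta_m)=|G_{2m}|$ --- is the entire content of the theorem, and you have not proved it; you have only named two strategies, and the first one does not work as described. The peeling is forced only once: after deleting rows $m-1,m$ and columns $1,2m-1$, the upper staircase rows become indicators of intervals $[2r,2m-3]$ and the lower ones of $[1,2i-2]$, so the new board is \emph{not} a copy of $\Delta_{m-1}$ (whose upper rows start at odd positions $2r+1$) and no further row is a singleton --- e.g.\ for $m=3$ the peeled board is $\begin{bmatrix}0&1&1\\1&1&0\\1&1&1\end{bmatrix}$ with permanent $3$, whereas $\per(\Delta_2)=1$. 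So "iterating this peeling" does not set up the claimed bijection, and the permutation class you call Dumont permutations of the first kind (with $\sigma(2i-1)>2i-1$, $\sigma(2i)\le 2i$) is in any case the second kind.

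Your second suggested route --- a Gandhi/Kreweras-type recurrence --- is the one that actually succeeds, and it is what the paper carries out: one must refine the single number $\per(\tilde{L}_{2m-3})$ to the full vector of minors obtained by expanding along the all-ones bottom row, and verify that these minors satisfy the two alternating recurrences
$\tilde{L}_{2m-1,2k-1}=\tilde{L}_{2m-1,2k-2}+\sum_{i\ge 2k-2}\tilde{L}_{2m-3,i}$ and
$\tilde{L}_{2m-1,2k}=\tilde{L}_{2m-1,2k-1}-\sum_{i\le 2k-2}\tilde{L}_{2m-3,i}$
defining Kreweras' triangle, whose row sums are the unsigned Genocchi numbers. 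Verifying these (by comparing adjacent columns of the board, which differ in a single entry) is genuine work that your proposal leaves entirely undone, so as it stands the argument has a gap precisely at its crux.
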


This conjecture has aroused quite some interests and triggered further conjectures over the on-line forum {\tt MathOverflow} concerning the evaluations of various permanents. We aim to prove six of the conjectures (including the above one) posted there.

Three combinatorial sequences play major roles in the current work. They are the Genocchi numbers, the median Genocchi numbers, and the Euler numbers. We collect their definitions here and state the remaining five conjectures afterwards.

The Genocchi numbers (of the first kind) $G_1,G_2,\ldots$ are given by
$$\f{2x}{e^x+1}=\sum_{n=1}^\infty G_n\f{x^n}{n!}\ \ (|x|<\pi).$$
It is known that $G_n=2(1-2^n)B_n$ for any positive integer $n$ (cf. \cite[A036968]{oeis});
in particular, $G_{2n+1}=2(1-2^{2n+1})B_{2n+1}=0$
and $$(-1)^nG_{2n}=2(2^{2n}-1)(-1)^{n-1}B_{2n}>0$$
for all $n=1,2,3,\ldots$. Note that
$$G_1=1,\ G_2=-1,\ G_4=1,\ G_6=-3,\ G_8=17,\ G_{10}=-155,\ G_{12}=2073.$$

The median Genocchi numbers (or Genocchi numbers of the second kind, cf. \cite[A005439]{oeis}) $H_1,H_3,H_5,\ldots$ can be defined by their relation with $G_{2n}\ (n\ge1)$:
$$H_{2n-1}=(-1)^n\sum_{j=0}^{\lfloor\frac{n-1}{2}\rfloor}\binom{n}{2j+1}G_{2n-2j}\ \ \text{ for all }n=1,2,3,\ldots.$$
For example,
$$H_1=1,\ H_3=2,\ H_5=8,\ H_7=56,\ H_9=608,\ H_{11}=9440.$$

The {\em Euler numbers} $(E_n)_{n\geq0}$  are defined as the coefficients of the Taylor expansion
$$
\sec(x)+\tan(x)=\sum_{n\geq0} E_n\frac{x^n}{n!}=1+1\frac{x}{1!}+1\frac{x^2}{2!}+2\frac{x^3}{3!}+5\frac{x^4}{4!}+16\frac{x^5}{5!}+ 61\frac{x^6}{6!}+272\frac{x^7}{7!}+\cdots.
$$
It was Andr\'e~\cite{Andre} in $1879$ who first discovered the interpretation of $E_n$ as the number of {\em alternating (down-up) permutations} of length $n$. The Euler numbers $E_{2n}$ of even indices are called {\em secant numbers}, while those $E_{2n-1}$ with odd indices are called {\em tangent numbers}.

Motivated by Conjecture \ref{conj:Sun}, P. Luschny~\cite[A005439]{oeis} made the following similar conjecture involving median Genocchi numbers.

\begin{conj}[P. Luschny]\label{conj:Lus} Let $n$ be any positive integer and define
$$M_{2n}:=\left[\left\lfloor\frac{2j-k-1}{2n}\right\rfloor\right]_{1\le j,k\le 2n}.$$
 Then we have
\begin{align}
\per(M_{2n})=(-1)^nH_{2n-1}.
\end{align}
\end{conj}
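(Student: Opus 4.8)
The plan is to expand the permanent combinatorially. Since each entry $\lfloor (2j-k-1)/(2n)\rfloor$ lies in $\{-1,0\}$ for $1\le j,k\le 2n$ (indeed it equals $0$ exactly when $2j-k-1\ge 0$, i.e. $k\le 2j-1$, and $-1$ otherwise), the product $\prod_{j=1}^{2n}\lfloor(2j-\pi(j)-1)/(2n)\rfloor$ over a permutation $\pi\in\S_{2n}$ is either $0$ or $(-1)^{2n}=1$ when no factor vanishes, with a sign $(-1)^{\#\{j:\ \pi(j)>2j-1\}}$ in general — wait, more precisely the product is $(-1)^{b(\pi)}$ if $b(\pi):=\#\{j:\pi(j)\ge 2j\}$ counts the $-1$ factors and is nonzero only if... no: the product is always either $0$ is impossible since entries are never $0$? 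They are $0$ or $-1$, never making the product vanish, so $\per(M_{2n})=\sum_{\pi\in\S_{2n}}(-1)^{b(\pi)}$ where $b(\pi)=\#\{j\in[2n]:\pi(j)\ge 2j\}$. So the first step is to rewrite $\per(M_{2n})=\sum_{\pi\in\S_{2n}}(-1)^{b(\pi)}$, a signed enumeration of permutations of $[2n]$ by the statistic ``number of indices $j$ with $\pi(j)\ge 2j$.''

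Next I would recognize this signed sum as a known evaluation connected to median Genocchi numbers. The statistic $\#\{j:\pi(j)\ge 2j\}$ is reminiscent of the combinatorial models for Genocchi numbers via ``surjective staircases'' or Dumont's permutation models; in particular there are classical results (Dumont, Dumont--Randrianarivony, Barsky--Dumont) expressing $H_{2n-1}$ and the normalized median Genocchi numbers as sums over permutations weighted by excedance-type statistics. I would try to match $b(\pi)$ against the statistic $\#\{j:\pi(j)>j\}$ (ordinary excedances) after a suitable bijective reindexing, or directly set up a sign-reversing involution on the ``bad'' part of $\S_{2n}$. Concretely, define a partial involution that pairs most permutations into sign-cancelling pairs, leaving a set of survivors counted (up to sign $(-1)^n$) by $H_{2n-1}$. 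A natural candidate: for a permutation that is not ``alternating-like'' with respect to the thresholds $2j$, find the smallest $j$ where a local swap changes the parity of $b$ without affecting membership, à la the Franklin-type involutions used for Genocchi numbers.

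Alternatively — and this may be the cleanest route — I would introduce the generating polynomial $P_n(t):=\sum_{\pi\in\S_{2n}}t^{b(\pi)}$, show it satisfies a recurrence (by inserting the value $2n$ or by a transfer-matrix / Foata-type argument on the constraints $\pi(j)\ge 2j$), and then compare with the known recurrence/continued-fraction expansion for the median Genocchi numbers, so that $P_n(-1)=(-1)^nH_{2n-1}$ falls out. The normalized median Genocchi numbers $\bar H_n$ are known to count certain permutations or Dyck-path objects with explicit recurrences (e.g. $\bar H_{n+1}=\sum \binom{?}{?}\bar H_k$); establishing that $P_n(-1)$ obeys the matching recurrence with matching initial values $P_1(-1)=\per(M_2)=-1=(-1)^1H_1$ would finish the proof.

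The main obstacle I anticipate is the combinatorial identification in the middle step: translating the rather unusual statistic $b(\pi)=\#\{j:\pi(j)\ge 2j\}$ into one of the established Genocchi-number permutation statistics, or finding the right recurrence for $P_n(t)$. The threshold $2j$ (rather than $j$, $j+1$, or $2j-1$) is non-standard, so I expect to need a careful bijection — possibly splitting $[2n]$ into the ``low'' positions $j\le n$ (where $\pi(j)\ge 2j$ is a genuine restriction) and ``high'' positions $j>n$ (where $2j>2n\ge\pi(j)$ always fails, so those factors contribute $+1$ automatically), which already localizes the sign to the behavior of $\pi$ on $[n]\to[2n]$ and should bring the surjective-staircase model of Genocchi numbers into view. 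Verifying the base cases and small $n$ numerically against $(-1)^nH_{2n-1}$ would be done first to pin down the exact form of the bijection or recurrence.
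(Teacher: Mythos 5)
Your opening reduction is incorrect, and the error propagates through the rest of the plan. The entries of $M_{2n}$ take all three values $-1,0,1$, not just $\{-1,0\}$: for $1\le j\le n$ one has $\lfloor(2j-k-1)/(2n)\rfloor\in\{-1,0\}$ (equal to $-1$ iff $k\ge 2j$), but for $n<j\le 2n$ one has $\lfloor(2j-k-1)/(2n)\rfloor\in\{0,1\}$ (equal to $1$ iff $k\le 2j-2n-1$); for instance the $(2n,1)$ entry is $\lfloor(4n-2)/(2n)\rfloor=1$. Moreover the zero entries certainly annihilate terms of the permanent. Hence the identity $\per(M_{2n})=\sum_{\pi\in\S_{2n}}(-1)^{\#\{j:\,\pi(j)\ge 2j\}}$ is false: already for $n=1$ the right-hand side equals $1-1=0$, while $\per(M_2)=\per\bigl[\begin{smallmatrix}0&-1\\ 1&0\end{smallmatrix}\bigr]=-1$. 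The true combinatorial content is the opposite of a signed sum over all of $\S_{2n}$: a term survives only if $\pi(j)\ge 2j$ for all $j\le n$ and $\pi(j)\le 2j-2n-1$ for all $j>n$, and every surviving term equals $(-1)^{n}$, so the problem is an \emph{unsigned} enumeration of a restricted permutation class. Your subsequent suggestions (a sign-reversing involution on all of $\S_{2n}$, or evaluating $P_n(t)=\sum_{\pi}t^{b(\pi)}$ at $t=-1$ against a Genocchi recurrence) are therefore aimed at the wrong object, and in any case they are only sketched, not carried out.

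For comparison, the paper's proof first observes that the constraints force two rows (those indexed $j=n$ and $j=n+1$, which admit a unique nonzero choice), deletes them, normalizes signs and reorders rows and columns to obtain a $0/1$ matrix in which the rows come in identical pairs; its permanent is then $2^{n-1}$ times the number of permutations satisfying a staircase condition, and these permutations are put in bijection with Dellac configurations, which Feigin showed are counted by the normalized median Genocchi numbers $H_{2n-1}/2^{n-1}$. Your closing remark about localizing to the positions $j\le n$ is the right instinct, but after repairing the first step the target is such a bijection with Dellac configurations (or an equivalent model for the median Genocchi numbers), not a sign-balance identity.
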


Recall that the sign function is given by
$$\sgn(x)=\begin{cases}
1&\t{if}\ x>0,\\
0&\t{if}\ x=0,\\-1&\t{if}\ x<0.
\end{cases}$$

The following third, fourth, and fifth conjectures (involving the sign function) were
raised by Deyi Chen~\cite{Che1,Che2}.

\begin{conj}[D. Chen]\label{conj:ChenA} Let $n$ be any positive integer, and set
$$A_{2n}:=\l[\sgn\l(\tan\pi\frac{i+j}{2n+1}\r)\r]_{1\le i,j\le 2n}.$$
Then we have
\begin{align}
\per(A_{2n})=\per(A^{-1}_{2n})=(-1)^nE_{2n}.
\end{align}
\end{conj}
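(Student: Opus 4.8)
The plan is to first identify the entries of $A_{2n}$ and to pass to a more symmetric matrix, and then to treat the two permanents in turn. Since $\tan(\pi x)$ and $\sin(2\pi x)$ have the same sign whenever $x$ is not a half-integer, and $\tfrac{i+j}{2n+1}$ is never a half-integer, each entry of $A_{2n}$ equals $\sgn\bigl(\sin\tfrac{2\pi(i+j)}{2n+1}\bigr)$, which is $+1$, $-1$ or $0$ according as $(i+j)\bmod(2n+1)$ lies in $\{1,\dots,n\}$, lies in $\{n+1,\dots,2n\}$, or equals $0$. Reversing the order of the columns is permanent-preserving and turns $A_{2n}$ into the Toeplitz matrix $B_{2n}:=\bigl[\sgn\bigl(\sin\tfrac{2\pi(i-j)}{2n+1}\bigr)\bigr]_{1\le i,j\le 2n}$, which is skew-symmetric with zero diagonal; moreover $A_{2n}=B_{2n}J$ with $J$ the reversal matrix, so $A_{2n}^{-1}=JB_{2n}^{-1}$ and hence $\per(A_{2n}^{-1})=\per(B_{2n}^{-1})$. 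It therefore suffices to prove $\per(B_{2n})=\per(B_{2n}^{-1})=(-1)^nE_{2n}$.

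For $\per(B_{2n})$ I would write it as $\sum_{\sigma}(-1)^{m(\sigma)}$, summed over fixed-point-free $\sigma\in\S_{2n}$, where $m(\sigma)=\#\{\,i:(i-\sigma(i))\bmod(2n+1)>n\,\}$, and then show this signed count equals $(-1)^nE_{2n}$. The natural device is a sign-reversing involution on these permutations whose surviving terms all carry the sign $(-1)^n$ -- for instance by matching the fixed points to the down-up alternating permutations of $[2n]$, which are counted by $E_{2n}$ by Andr\'e's theorem; the first equality would then follow. A workable alternative is to expand $\per(A_{2n})$ along a boundary row, obtain a recurrence, and recognize it -- after a suitable relabelling of indices -- as the Seidel--Entringer recurrence that generates the Euler numbers. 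Locating the correct involution, or matching the Entringer recurrence, is the first point that calls for real care.

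For $\per(B_{2n}^{-1})$ one must first describe the inverse, which requires a separate argument. I would obtain $\det(A_{2n})=(-1)^n$ by viewing $A_{2n}$ as the submatrix obtained from the singular $(2n+1)\times(2n+1)$ Hankel--circulant matrix $\bigl[\,c_{(i+j)\bmod(2n+1)}\,\bigr]$, with $c_0=0$ and $c_1=\cdots=c_n=1$, $c_{n+1}=\cdots=c_{2n}=-1$, upon deleting its $0$-th row and column: this big matrix is diagonalized by $(2n+1)$-th roots of unity, its all-ones vector spans the kernel, so its adjugate equals $\tfrac{1}{2n+1}\bigl(\prod_{k\neq0}\lambda_k\bigr)\mathbf{1}\mathbf{1}^{\mathrm{T}}$, and reading off the $(0,0)$-entry gives $\det(A_{2n})=\tfrac{1}{2n+1}\prod_{k\neq0}\lambda_k=(-1)^n$. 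Hence $B_{2n}^{-1}$ is an integral skew-symmetric matrix, and it remains to write $A_{2n}^{-1}$ (equivalently $B_{2n}^{-1}$) explicitly enough to re-run the combinatorial argument -- ideally by exhibiting $A_{2n}^{-1}$ as a signed permutation conjugate of $A_{2n}$, which would make the second equality immediate, and otherwise by reading off the sign pattern of the inverse and applying the involution or recurrence of the previous step to it. This is where I expect the main obstacle: unlike $A_{2n}$ itself, its inverse is not visibly of the transparent ``sign of a trigonometric expression'' type, so producing a usable closed form for it is the crux.
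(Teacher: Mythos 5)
Your preparatory reductions are sound: $\sgn(\tan\pi x)=\sgn(\sin 2\pi x)$ away from half-integers, so the entries of $A_{2n}$ are governed by $(i+j)\bmod(2n+1)$; reversing the columns gives a skew-symmetric Toeplitz matrix $B_{2n}$ with the same permanent, and $\per(A_{2n}^{-1})=\per(B_{2n}^{-1})$ since the two inverses differ by a row permutation. But both places where you yourself flag ``real care'' or ``the crux'' are genuine gaps, not details. First, the statistic $m(\sigma)=\#\{i:(i-\sigma(i))\bmod(2n+1)>n\}$ is \emph{not} the excedance number (it counts $i$ with $1\le\sigma(i)-i\le n$ together with $i$ satisfying $i-\sigma(i)>n$), so Roselle's sign-balance \eqref{Roselle-sign} does not apply to it directly, and neither the sign-reversing involution nor the Entringer-recurrence match is exhibited. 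This is exactly the step the paper has to work for: it introduces a variant statistic ($\exph$, read off from the sign pattern of $A_{2n}^{-1}$) and constructs an explicit bijection $\Phi$ on $\S_{2n}$, induced by $\phi(2k-1)=n+k$, $\phi(2k)=k$, which carries $\exc$ to $\exph$ while preserving $\fix$; only then can \eqref{Roselle-sign} be invoked. Without such a bijection or involution your first equality is unproven.

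Second, for the inverse you compute $\det(A_{2n})=(-1)^n$ via the circulant (a plausible but tangential calculation) and then concede that you lack a usable closed form for $A_{2n}^{-1}$. The paper resolves this by the block decomposition
$A_{2n}=\begin{bmatrix} P_n-J_n & -P_n\\ -P_n & P_n+J_n\end{bmatrix}$
with $P_n$ the matrix of Conjecture~\ref{conj:Chen}, together with the relations $J_n^2=I_n$ and $J_nP_n=-P_nJ_n$, which give
$A_{2n}^{-1}=\begin{bmatrix} P_n-J_n & P_n\\ P_n & P_n+J_n\end{bmatrix}=A_{2n}\circ\tilde U_{2n}$,
a Hadamard product with a $\pm1$ block sign matrix. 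Since that sign matrix is realized by multiplying suitable rows and columns by $-1$ (Lemma~\ref{H:sign2}), one gets $\per(A_{2n})=\per(A_{2n}^{-1})$ at once --- this is precisely the ``signed conjugate'' you hoped for but did not produce --- and the explicit entries of $A_{2n}^{-1}$ are what make the derangement interpretation via $\exph$ possible. In short, your outline correctly isolates the two hard points but supplies neither; the block structure in terms of $P_n$ and $J_n$, and the bijection $\Phi$, are the missing ideas.
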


\begin{conj}[D. Chen]\label{conj:Chen} Let $n$ be a positive integer, and set
$$P_n:=\left[\sgn\left(\sin\pi\frac{i+j}{n+1}\right)\right]_{1\le i,j\le n}.$$
Then
\begin{equation}\label{P2n}
\per(P_{2n})=\per(P^{-1}_{2n})=(-1)^nE_{2n}.
\end{equation}
\end{conj}

\begin{conj}[D. Chen]\label{conj:Chen2}
Let $n$ be a positive integer, and set
 $$Q_n:=\left[\sgn\left(\sin\pi\frac{i+2j}{n+1}\right)\right]_{1\le i,j\le n}.$$  Then
\begin{equation}
\per(Q_n)=(-1)^nE_{n}.
\end{equation}
\end{conj}

In view of the above conjectures relating permanents involving ``$\tan$'' and ``$\sin$'' directly to (signed) Euler numbers, it seems natural to consider the trigonometric function ``$\cos$'' instead, and compute the corresponding permanents. This consideration leads to the most recent conjecture of Deyi Chen~\cite{Che3}.

\begin{conj}[D. Chen]\label{conj:Chen3} Let $n$ be a positive integer, and define
$$R_n:=\left[\sgn\l(\cos\pi\frac{i+j}{n+1}\r)\right]_{1\le i,j\le n}.$$ Then
\begin{equation}\label{bino:chen}
\per(R_n)=\begin{cases}
-\sum_{k=0}^m\binom{m}{k}E_{2k+1}&\quad\text{if $n=2m+1$},\\
\sum_{k=0}^m\binom{m}{k}E_{2k}&\quad\text{if $n=2m$}.
\end{cases}
\end{equation}
\end{conj}

We are going to prove Conjectures \ref{conj:Sun} and \ref{conj:Lus} in Section \ref{sec:conj of sun}, where four permutation interpretations of Kreweras' triangle are derived as byproduct. Relying on the classical sign-balance results for the excedance polynomials over permutations and derangements, as well as certain elementary action on matrices, we will prove Conjectures \ref{conj:ChenA}--\ref{conj:Chen2} in Section \ref{sec:conj of chen}. The proof of Conjecture \ref{conj:Chen3} is a combination of the Foata--Strehl action~\cite{FSt} and the bivariate generating functions of two types of Poupard numbers studied by Foata and Han \cite{FH}. This will be done in Section~\ref{sec:conj of chen2}, where a conjecture posted to \cite[A005799]{oeis} by Peter Bala is also confirmed. Moreover, we will give in the last section a new permutation interpretation for the $\gamma$-coefficients of the descent polynomials on the multiset $\{1,1,2,2,\ldots,n,n\}$, and conclude our paper by posing some related open problems for further research.

\section{Proofs of Conjectures~\ref{conj:Sun}-\ref{conj:Lus} and some relevant results}\label{sec:conj of sun}

\subsection{Proof of Conjecture~\ref{conj:Sun}}
We begin with some initial observations and analysis on Conjecture~\ref{conj:Sun}. Let $$L_n:=\left[\left\lfloor\frac{2j-k}{n}\right\rfloor\right]_{1\le j,k\le n},$$ which is a matrix with entries in $\{0,\pm1\}$. We observe the following sign patterns (whether an entry is $1$, $0$ or $-1$) of the matrix $L_n$.

\begin{Fact}\label{fact}
\begin{enumerate}[{\rm (1)}]
\item For even $n=2m\ge 2$, we have $\per(L_n)=B_{n+1}=0$, since the $m$-th row of $L_n$ contains only zeros.
\item For odd $n=2m+1$, the first $m$ rows of $L_n$ begin with $0$s and end with $-1$s, the next $m$ rows (the ($m+1$)-th row to the $2m$-th row) begin with $1$s and end with $0$s, while the last row contains only $1$s. In particular, we have $\sgn(\per(L_n))=(-1)^m=\sgn(B_{n+1})=-\sgn(G_{2m+2})$.
\item For odd $n=2m+1$, the $m$-th row of $L_n$ begins with $2m$ consecutive $0$s and ends with one $-1$, while the $(m+1)$-th row of $L_n$ begins with one $1$ and continues with $2m$ consecutive $0$s.
\end{enumerate}
\end{Fact}

In view of Fact~\ref{fact}(1)-(2), it suffices to show that
\begin{align}
\label{id:per=G}
|\per(L_{2m+1})|=(-1)^{m+1}G_{2m+2}\  \text{ for every $m=0,1,2,\ldots$.}
\end{align}

Now Fact~\ref{fact}(3) tells us that when we expand $\per(L_{2m+1})$ to get non-zero terms, the choices for the $m$-th and $(m+1)$-th rows are unique, namely, we are forced to choose the last entry $-1$ for the $m$-th row and the first entry $1$ for the $(m+1)$-th row. Therefore, we shall delete these two rows, as well as the first and last columns of $L_{2m+1}$, and extract $-1$ from each of the first $m-1$ rows to consider the matrix
$\tilde{L}_{2m-1}=(\ell_{i,j})_{1\le i,j\le 2m-1}$, where
$$\ell_{i,j}=\begin{cases}
1 & \text{if $1\le i\le m-1\ \&\ 2i\le j\le 2m-1$,}\\
 & \text{or $m\le i\le 2m-2\ \&\ 1\le j\le 2(i-m)+2$, or $i=2m-1$,}\\
0 & \text{otherwise,}
\end{cases}$$
and then observe that
\begin{align}
\label{id:pertildeA=perA}
\per(\tilde{L}_{2m-1})=|\per(L_{2m+1})|.
\end{align}
For example, the first four matrices are $\tilde{L}_1=[1]$, and
\begin{align*}
\tilde{L}_3=\begin{bmatrix}
0 & 1 & 1 \\
1 & 1 & 0 \\
1 & 1 & 1
\end{bmatrix},
\;
\tilde{L}_5=\begin{bmatrix}
0 & 1 & 1 & 1 & 1\\
0 & 0 & 0 & 1 & 1\\
1 & 1 & 0 & 0 & 0\\
1 & 1 & 1 & 1 & 0\\
1 & 1 & 1 & 1 & 1
\end{bmatrix},
\;
\tilde{L}_7=\begin{bmatrix}
0 & 1 & 1 & 1 & 1 & 1 & 1\\
0 & 0 & 0 & 1 & 1 & 1 & 1\\
0 & 0 & 0 & 0 & 0 & 1 & 1\\
1 & 1 & 0 & 0 & 0 & 0 & 0\\
1 & 1 & 1 & 1 & 0 & 0 & 0\\
1 & 1 & 1 & 1 & 1 & 1 & 0\\
1 & 1 & 1 & 1 & 1 & 1 & 1
\end{bmatrix}.
\end{align*}

In order to calculate $\per(\tilde{L}_{2m-1})$, we expand $\tilde{L}_{2m-1}$ along the bottom row, and denote the $(2m-1,i)$-minor by $\tilde{L}_{2m-1,i}$ ($1\le i\le 2m-1$). That is,  $\tilde{L}_{2m-1,i}$ is the permanent of the submatrix obtained from deleting the $(2m-1)$-th row and the $i$-th column of $\tilde{L}_{2m-1}$. The following recurrence relation fully characterizes these minors, and it is vital to our proof of Conjecture~\ref{conj:Sun}.

\begin{lemma}\label{rec:krew}
For integers $m\gs k\gs1$, we have
\begin{equation}
\label{rec+}
\tilde{L}_{2m-1,2k-1} =\tilde{L}_{2m-1,2k-2}+\sum_{i=2k-2}^{2m-3}\tilde{L}_{2m-3,i}
\end{equation}
and
\begin{equation}\label{rec-}
\tilde{L}_{2m-1,2k} =\tilde{L}_{2m-1,2k-1}-\sum_{i=1}^{2k-2}\tilde{L}_{2m-3,i},
\end{equation}
where we set $\tilde{L}_{2m-1,0}=\tilde{L}_{2m-1,2m}=0$.
\end{lemma}
\begin{proof}
If we delete the $m$-th row and the $(2m-1)$-th row, and the first two columns of $\tilde{L}_{2m-1}$, we get a submatrix that becomes $\tilde{L}_{2m-3}$ once we flip it upside down, then left to right. This shows the $k=1$ case of \eqref{rec+}.

Next, for $k\ge 2$, comparing the $(2k-1)$-th column of $\tilde{L}_{2m-1}$ with the $(2k-2)$-th column, we see that the only discrepancy is $\ell_{m+k-2,2k-2}=1$ while $\ell_{m+k-2,2k-1}=0$. This means the difference of the two minors, $\tilde{L}_{2m-1,2k-1} -\tilde{L}_{2m-1,2k-2}$, is given by the permanent of the submatrix obtained from deleting the $(m+k-2)$-th, $(2m-1)$-th rows, and the $(2k-2)$-th, $(2k-1)$-th columns of $\tilde{L}_{2m-1}$. Expanding this permanent along the $(k-1)$-th row, we see that it coincides with the summation
$$\sum_{i=2k-2}^{2m-3}\tilde{L}_{2m-3,i}.$$
This proves \eqref{rec+}. The equality \eqref{rec-} can be proved similarly, and we omit the details.
\end{proof}

The signless Genocchi numbers $(-1)^nG_{2n}$ possess   many interesting combinatorial and arithmetic properties. The first combinatorial interpretation of Genocchi numbers was found by Dumont~\cite{du} in 1974, which asserts that
$|\mathcal{D}_{2n-1}|=(-1)^nG_{2n}$ with
 $$
 \mathcal{D}_{2n-1}:=\{\sigma\in\S_{2n-1}:\forall\, i\in[2n-2], \text{ $\sigma(i)>\sigma(i+1)$ if and only if $\sigma(i)$ is even}\}.
 $$
 Since then, many other interpretations of Genocchi numbers have been found in the literature; see~\cite{Big,Bur,Fei,Han,Kre,LY} and the references therein.

Our final step for the proof of Conjecture~\ref{conj:Sun} is to realize that the recurrences \eqref{rec+} and \eqref{rec-}  are precisely the recurrences for Kreweras' triangle~\cite{Kre}:
\begin{center}
\begin{tabular}{ccccccccccccccccc}
 & & & &  & &  & & 1 & &  & & & & & & \\
 & & & &  & & 1 & & 1 & & 1 & & & & & & \\
 & & & & 3 & & 3 & & 5 & & 3 & & 3 & & & & \\
 & & 17 & & 17 & & 31 & & 25 & & 31 & & 17 & & 17 & & \\
155 & & 155 & & 293 & & 259 & & 349 & & 259 & & 293 & & 155 & & 155 \\
 & & & &  & &  & & $\vdots$ & &  & & & & & &
\end{tabular}
\end{center}
This is a well-known triangle that refines the Genocchi numbers.
Its entry in the $m$-th row and the $k$-th column is given by
$$K_{2m-1,k}:=
|\{\sigma\in\mathcal{D}_{2m+1}:\, \sigma(1)=k+1\}|.
$$
For instance, $K_{5,2}=3$ counts the three qualified permutations in $\mathcal{D}_7$ that begin with letter $3$:
$$3421657,\ \ 3564217,\ \ 3642157.$$
Kreweras~\cite{Kre} proved that the triangle $K_{2m-1,k}$ ($1\leq k\leq 2m-1$)
shares the same recurrence relation as $\tilde{L}_{2m-1,k}$ in Lemma~\ref{rec:krew}. This leads to the following refinement of Conjecture~\ref{conj:Sun}.

\begin{theorem}\label{per:krew} Let $m$ be a positive integer. For each
$k=1,2,\ldots, 2m-1$, we have $\tilde{L}_{2m-1,k}=K_{2m-1,k}$. In particular, $\per(\tilde{L}_{2m-1})=(-1)^{m+1}G_{2m+2}$, and hence Conjecture~\ref{conj:Sun} holds.
\end{theorem}

\subsection{Four permutation interpretations of Kreweras' triangle}

  It is interesting to point out that Theorem~\ref{per:krew} is equivalent to the following new permutation interpretation of Kreweras' triangle.

\begin{corollary}
The Kreweras number  $K_{2m-1,k}$ with $1\le k\le 2m-1$ enumerates permutations $\sigma\in\S_{2m-1}$ satisfying $\sigma^{-1}(2m-1)=k$ and $\sigma(i)\notin[\lfloor\frac{i}{2}\rfloor+1, \lceil\frac{i}{2}\rceil+m-2]$ for each $i\in[2m-1]$.
\end{corollary}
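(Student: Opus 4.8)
The plan is to read off the matrix $\tilde{L}_{2m-1}$ directly from its description and show that the permanent expansion is exactly counting the permutations in the stated class, so that the identity $\tilde{L}_{2m-1,k}=K_{2m-1,k}$ of Theorem~\ref{per:krew} translates term by term. Concretely, recall that $\tilde{L}_{2m-1}=(\ell_{i,j})$ has $\ell_{i,j}=1$ precisely when $1\le i\le m-1$ and $2i\le j\le 2m-1$, or $m\le i\le 2m-2$ and $1\le j\le 2(i-m)+2$, or $i=2m-1$; all other entries vanish. Since every entry of $\tilde{L}_{2m-1}$ lies in $\{0,1\}$, the permanent $\per(\tilde{L}_{2m-1})=\sum_{\sigma\in\S_{2m-1}}\prod_{i}\ell_{i,\sigma(i)}$ counts exactly those $\sigma$ for which $\ell_{i,\sigma(i)}=1$ for every $i$, i.e. those $\sigma$ avoiding the zero pattern in each row.

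First I would rewrite the set of columns on which row $i$ is \emph{zero}. For $1\le i\le m-1$ the row is $1$ on $[2i,2m-1]$, hence zero on $[1,2i-1]$; for $m\le i\le 2m-2$ the row is $1$ on $[1,2(i-m)+2]$, hence zero on $[2(i-m)+3,2m-1]$; row $2m-1$ has no zeros. A short case check (splitting on the parity of $i$, and noting $\lfloor i/2\rfloor$, $\lceil i/2\rceil$) shows that in all three ranges the forbidden column set for row $i$ is exactly the integer interval $[\lfloor i/2\rfloor+1,\ \lceil i/2\rceil+m-2]$: for $i$ even this is $[i/2+1,\,i/2+m-2]$ and for $i$ odd it is $[(i+1)/2,\,(i-1)/2+m-2+1]$, which one matches against $[1,2i-1]$ and $[2(i-m)+3,2m-1]$ respectively, and against the empty set when $i=2m-1$ (here $\lfloor i/2\rfloor+1=m>m-1=\lceil i/2\rceil+m-2$, so the interval is empty). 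Hence $\prod_i\ell_{i,\sigma(i)}=1$ if and only if $\sigma(i)\notin[\lfloor i/2\rfloor+1,\lceil i/2\rceil+m-2]$ for all $i\in[2m-1]$, which is the first condition in the corollary; summing over all such $\sigma$ gives $\per(\tilde{L}_{2m-1})=K_{2m-1}$ by Theorem~\ref{per:krew}.

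Next, to obtain the refined (per-column) statement, I would bring in the expansion along the bottom row used to define the minors: $\per(\tilde{L}_{2m-1})=\sum_{k=1}^{2m-1}\ell_{2m-1,k}\,\tilde{L}_{2m-1,k}=\sum_{k=1}^{2m-1}\tilde{L}_{2m-1,k}$, and each $\tilde{L}_{2m-1,k}$ is itself a $0/1$ permanent, hence counts the permutations $\sigma$ of the above type with the additional constraint $\sigma(2m-1)=k$, equivalently $\sigma^{-1}(2m-1)=?$ — here I must be careful about the indexing convention. Deleting row $2m-1$ and column $k$ and taking the permanent counts bijections from $[2m-2]$ to $[2m-1]\setminus\{k\}$ respecting the pattern; extending by $\sigma(2m-1)=k$ gives a $\sigma\in\S_{2m-1}$ with the pattern-avoidance condition and with the value $2m-1$ (the last row index) sent to column $k$. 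To match the corollary's normalization $\sigma^{-1}(2m-1)=k$ I would instead expand along the bottom \emph{column} (or equivalently transpose), which is legitimate since $\per(A)=\per(A^{T})$; transposing swaps the roles of $i$ and $j$, and one checks the forbidden-set description is symmetric enough that the transposed matrix has the same row/column pattern up to the involution $i\mapsto$ something — this bookkeeping is the one place that needs care. Combining, $K_{2m-1,k}=\tilde{L}_{2m-1,k}=\#\{\sigma\in\S_{2m-1}:\sigma(i)\notin[\lfloor i/2\rfloor+1,\lceil i/2\rceil+m-2]\ \forall i,\ \sigma^{-1}(2m-1)=k\}$, as claimed.

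The main obstacle is precisely the last step: verifying that the forbidden-column interval for each row of $\tilde L_{2m-1}$ equals $[\lfloor i/2\rfloor+1,\lceil i/2\rceil+m-2]$ uniformly (the even/odd split and the boundary rows $i=m-1$, $i=m$, $i=2m-1$ must all be checked against the piecewise definition of $\ell_{i,j}$), together with the transpose/indexing bookkeeping needed to land on $\sigma^{-1}(2m-1)=k$ rather than $\sigma(2m-1)=k$. Everything else is a direct unwinding of definitions plus Theorem~\ref{per:krew}.
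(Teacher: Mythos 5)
Your overall strategy is the right one and is exactly what the paper does implicitly (the paper offers no separate proof: the corollary is just Theorem~\ref{per:krew} read through the permanent expansion of the $0/1$ matrix $\tilde L_{2m-1}$ and its bottom-row minors). However, there is a concrete error in your verification step, and it is not merely ``bookkeeping'': the interval $[\lfloor i/2\rfloor+1,\ \lceil i/2\rceil+m-2]$ is \emph{not} the set of columns on which row $i$ of $\tilde L_{2m-1}$ vanishes. It cannot be: that interval has at most $m-1$ elements for every $i$, whereas the zero set of row $i$ is $[1,2i-1]$ (of size $2i-1$) for $i\le m-1$ and $[2(i-m)+3,2m-1]$ (of size $2(2m-1-i)-1$) for $m\le i\le 2m-2$. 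For instance in $\tilde L_5$ ($m=3$), row $2$ vanishes on $\{1,2,3\}$ while your interval for $i=2$ is $[2,2]$. The matching you propose (``which one matches against $[1,2i-1]$ and $[2(i-m)+3,2m-1]$ respectively'') therefore fails, and the case check you defer would not go through.

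What is true, and what you need, is that the interval $[\lfloor j/2\rfloor+1,\ \lceil j/2\rceil+m-2]$ is the set of \emph{rows} on which \emph{column} $j$ vanishes: $\ell_{i,j}=0$ with $i\le m-1$ forces $\lfloor j/2\rfloor+1\le i\le m-1$, and $\ell_{i,j}=0$ with $m\le i\le 2m-2$ forces $m\le i\le \lceil j/2\rceil+m-2$, and these two pieces concatenate into one interval (row $2m-1$ never vanishes, consistent with the interval's top being at most $2m-2$). So the transpose must be taken \emph{first}, not only at the end to fix the normalization: writing $\per(\tilde L_{2m-1})=\sum_\sigma\prod_j \ell_{\sigma(j),j}$, a term is nonzero iff $\sigma(j)\notin[\lfloor j/2\rfloor+1,\lceil j/2\rceil+m-2]$ for all $j$, and the minor $\tilde L_{2m-1,k}$ (row $2m-1$, column $k$ deleted) collects precisely the terms with $\sigma(k)=2m-1$, i.e.\ $\sigma^{-1}(2m-1)=k$. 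With that correction both conditions of the corollary drop out simultaneously and Theorem~\ref{per:krew} finishes the argument; as written, your row-based identification is false and the proof does not close.
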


Let $\gamma_{n,i}$ be the row vector with the first $i$ entries being $0$ and the remaining $n-i$ entries being $1$. Let  $\bar\gamma_{n,i}:={\bf1}^T_n-\gamma_{n,i}$ be the complement of $\gamma_{n,i}$, where  ${\bf1}_n$ is the $n
$-dimensional column vector with all entries equal to $1$.  For brevity,  we write $\gamma_{n,i}$ as $\gamma_i$ when $n$ is fixed. Observe that
$$
\tilde{L}_{2m-1}=(\gamma_1,\gamma_3,\ldots,\gamma_{2m-3},\bar\gamma_2,\bar\gamma_4,\ldots,\bar\gamma_{2m-2},{\bf1}^T_{2m-1})^T.
$$
Rearrange the rows of $\tilde{L}_{2m-1}$ to form another matrix
$$
L^{\ast}_{2m-1}=(\bar\gamma_2,\bar\gamma_4,\ldots,\bar\gamma_{2m-2},{\bf1}^T_{2m-1},\gamma_1,\gamma_3,\ldots,\gamma_{2m-3})^T
$$ with the same permanent. For example,
\begin{align*}
L^{\ast}_3=\begin{bmatrix}
1 & 1 & 0 \\
1 & 1 & 1\\
0 & 1 & 1
\end{bmatrix},
\;
L^{\ast}_5=\begin{bmatrix}
1 & 1 & 0 & 0 & 0\\
1 & 1 & 1 & 1 & 0\\
1 & 1 & 1 & 1 & 1\\
0 & 1 & 1 & 1 & 1\\
0 & 0 & 0 & 1 & 1
\end{bmatrix},
\;
L^{\ast}_7=\begin{bmatrix}
1 & 1 & 0 & 0 & 0 & 0 & 0\\
1 & 1 & 1 & 1 & 0 & 0 & 0\\
1 & 1 & 1 & 1 & 1 & 1 & 0\\
1 & 1 & 1 & 1 & 1 & 1 & 1\\
0 & 1 & 1 & 1 & 1 & 1 & 1\\
0 & 0 & 0 & 1 & 1 & 1 & 1\\
0 & 0 & 0 & 0 & 0 & 1 & 1
\end{bmatrix}.
\end{align*}
 Thus, Theorem ~\ref{per:krew} implies the following alternative permutation  interpretation of Kreweras' triangle.

\begin{corollary}
The Kreweras number  $K_{2m-1,k}$ enumerates permutations $\sigma\in\S_{2m-1}$ satisfying $\sigma^{-1}(m)=k$ and $\lceil\frac{i}{2}\rceil\leq\sigma(i)\leq m+\lfloor\frac{i}{2}\rfloor$ for each $i\in[2m-1]$.
\end{corollary}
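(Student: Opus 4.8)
The plan is to derive this corollary from Theorem~\ref{per:krew} and the matrix $L^{\ast}_{2m-1}$ already constructed, using nothing beyond transposition and the invariance of the permanent under permutations of rows and columns. Recall that Theorem~\ref{per:krew} identifies $K_{2m-1,k}$ with $\tilde{L}_{2m-1,k}$, the permanent of the submatrix of $\tilde{L}_{2m-1}$ obtained by deleting the last row and the $k$-th column; since that last row is the all-ones vector, $\per(\tilde{L}_{2m-1})=\sum_{k}\tilde{L}_{2m-1,k}$.

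First I would observe that $L^{\ast}_{2m-1}$ is obtained from $\tilde{L}_{2m-1}$ by a row permutation (in fact the cyclic shift $p\mapsto p+m \bmod (2m-1)$) carrying the all-ones row from position $2m-1$ to position $m$, while the $\bar\gamma$-block and the $\gamma$-block are moved correspondingly. Hence deleting row $m$ of $L^{\ast}_{2m-1}$ and deleting row $2m-1$ of $\tilde{L}_{2m-1}$ leave matrices with the same multiset of rows, and after also deleting the $k$-th column from each the two permanents agree: the permanent of $L^{\ast}_{2m-1}$ with row $m$ and column $k$ removed equals $\tilde{L}_{2m-1,k}=K_{2m-1,k}$.

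Next I would transpose. Because the permanent is transpose-invariant, $K_{2m-1,k}$ also equals the permanent of $(L^{\ast}_{2m-1})^{T}$ with row $k$ and column $m$ removed. The computational heart of the argument is to read off $(L^{\ast}_{2m-1})^{T}$ directly from the block structure $\bar\gamma_2,\dots,\bar\gamma_{2m-2},{\bf 1}^{T}_{2m-1},\gamma_1,\dots,\gamma_{2m-3}$ of the rows of $L^{\ast}_{2m-1}$ and check that its $(i,j)$-entry equals $1$ exactly when $\lceil i/2\rceil\le j\le m+\lfloor i/2\rfloor$; in other words $(L^{\ast}_{2m-1})^{T}$ is precisely the banded $0/1$ matrix whose $i$-th row has support $\{\lceil i/2\rceil,\dots,m+\lfloor i/2\rfloor\}$. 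This is the step where one must be careful: the shifts relating $\gamma_{2\ell-1}$ and $\bar\gamma_{2\ell}$ to the ceiling/floor thresholds involve several off-by-one checks that split according to the parity of $i$. It is purely formal bookkeeping, but I expect it to be the only place an error could creep in.

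Finally, since $\lceil i/2\rceil\le m\le m+\lfloor i/2\rfloor$ for every $i\in[2m-1]$, column $m$ of this banded matrix $B$ is the all-ones vector, so the permanent of $B$ with row $k$ and column $m$ removed counts exactly the bijections from $[2m-1]\setminus\{k\}$ to $[2m-1]\setminus\{m\}$ compatible with the band; extending such a bijection by $k\mapsto m$ (which is legitimate as $B_{k,m}=1$), this is the number of $\sigma\in\S_{2m-1}$ with $\lceil i/2\rceil\le\sigma(i)\le m+\lfloor i/2\rfloor$ for all $i$ and $\sigma(k)=m$, i.e. $\sigma^{-1}(m)=k$. Chaining the four equalities gives $K_{2m-1,k}=\#\{\sigma\in\S_{2m-1}:\lceil i/2\rceil\le\sigma(i)\le m+\lfloor i/2\rfloor\ \text{for all } i,\ \sigma^{-1}(m)=k\}$, which is the assertion. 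The same scheme, omitting the transposition step, reproves the first corollary in parallel.
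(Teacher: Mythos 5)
Your proposal is correct and is precisely the argument the paper leaves implicit: identify $K_{2m-1,k}=\tilde{L}_{2m-1,k}$ via Theorem~\ref{per:krew}, use invariance of the permanent under the row rearrangement taking $\tilde{L}_{2m-1}$ to $L^{\ast}_{2m-1}$ (which moves the all-ones row to position $m$) and under transposition, and read off the banded support $\{\lceil i/2\rceil,\dots,m+\lfloor i/2\rfloor\}$ from the columns of $L^{\ast}_{2m-1}$; your band verification checks out (column $j\le m-1$ is $\bar\gamma_{2j}$ giving $j\ge\lceil i/2\rceil$, column $m+s$ is $\gamma_{2s-1}$ giving $j\le m+\lfloor i/2\rfloor$). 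No gaps.
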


We may also rearrange the rows of $\tilde{L}_{2m-1}$ as
$$
L^{\star}_{2m-1}=(\gamma_1,\bar\gamma_2,\gamma_3,\bar\gamma_4,\ldots,\gamma_{2m-3},\bar\gamma_{2m-2},{\bf1}^T_{2m-1})^T.
$$
For example,
\begin{align*}
L^{\star}_3=\begin{bmatrix}
0 & 1 & 1\\
1 & 1 & 0 \\
1 & 1 & 1
\end{bmatrix},
\;
L^{\star}_5=\begin{bmatrix}
0 & 1 & 1 & 1 & 1\\
1 & 1 & 0 & 0 & 0\\
0 & 0 & 0 & 1 & 1\\
1 & 1 & 1 & 1 & 0\\
1 & 1 & 1 & 1 & 1
\end{bmatrix},
\;
L^{\star}_7=\begin{bmatrix}
0 & 1 & 1 & 1 & 1 & 1 & 1\\
1 & 1 & 0 & 0 & 0 & 0 & 0\\
0 & 0 & 0 & 1 & 1 & 1 & 1\\
1 & 1 & 1 & 1 & 0 & 0 & 0\\
0 & 0 & 0 & 0 & 0 & 1 & 1\\
1 & 1 & 1 & 1 & 1 & 1 & 0\\
1 & 1 & 1 & 1 & 1 & 1 & 1
\end{bmatrix}.
\end{align*}
This rearrangement, together with Theorem~\ref{per:krew}, yields the following third interpretation of Kreweras' triangle.

\begin{corollary}\label{int3:krew}
 The Kreweras number  $K_{2m-1,k}$ enumerates permutations $\sigma\in\S_{2m-1}$
 for which $\sigma(2m-1)=k$, and $\sigma(i)>i$ if and only if $i\in\{2n-1:\ 1\ls n<m\}$.
\end{corollary}

A permutation $\sigma\in\S_{2m}$ is called a {\em Dumont permutation of the second kind}~\cite{Bur} if $\sigma(2i)<2i$ and $\sigma(2i-1)\geq 2i-1$ for every $i\in[m]$. It is clear that the permanent of the $2m\times2m$ matrix
$$
L^{\star}_{2m}=({\bf1}^T_{2m},\bar\gamma_1,\gamma_2,\bar\gamma_3,\gamma_4,\ldots,\bar\gamma_{2m-3},\gamma_{2m-2},\bar\gamma_{2m-1})^T
$$ enumerates Dumont permutations of the second kind in $\S_{2m}$.
For example,
\begin{align*}
L^{\star}_4=\begin{bmatrix}
1&1&1&1\\
1 & 0& 0&0\\
0 & 0&1&1 \\
1 & 1 & 1&0
\end{bmatrix},
\;
L^{\star}_6=\begin{bmatrix}
1 & 1 & 1 & 1 & 1&1\\
1&0 & 0 & 0 & 0 & 0\\
0&0 & 1 & 1 & 1 & 1\\
1&1 & 1 & 0 & 0 & 0\\
0&0 & 0 & 0 & 1 & 1\\
1&1 & 1 & 1 & 1 & 0
\end{bmatrix},
\;
L^{\star}_8=\begin{bmatrix}
1&1 & 1 & 1 & 1 & 1 & 1 & 1\\
1&0& 0 & 0 & 0 & 0 & 0 & 0\\
0&0 & 1 & 1 & 1 & 1 & 1 & 1\\
1&1 & 1 & 0 & 0 & 0 & 0 & 0\\
0&0 & 0 & 0 & 1 & 1 & 1 & 1\\
1&1 & 1 & 1 & 1 & 0 & 0 & 0\\
0&0 & 0 & 0 & 0 & 0 & 1 & 1\\
1&1 & 1 & 1 & 1 & 1 & 1 & 0
\end{bmatrix}.
\end{align*}
Observe that $\bar\gamma_1=(1,0,0,\ldots,0)$.  Deleting the second row and the first column from $L^{\star}_{2m}$ yields the $(2m-1)\times(2m-1)$-matrix $({\bf1}^T_{2m-1},\gamma_1,\bar\gamma_2,\gamma_3,\bar\gamma_4,\ldots,\gamma_{2m-3},\bar\gamma_{2m-2})^T$, which is clearly a row rearrangement of $L^{\star}_{2m-1}$.
This, together with Corollary~\ref{int3:krew}, implies the following interpretation of Kreweras' triangle in terms of Dumont permutations of the second kind.

\begin{corollary}\label{int4:krew}
Let $\mathcal{D}^2_{2m}$ be the set of Dumont  permutations of the second kind with length $2m$. Then
 the Kreweras number  $K_{2m-1,k}$ enumerates $\sigma\in\mathcal{D}^2_{2m}$ satisfying $\sigma(1)=k+1$.
\end{corollary}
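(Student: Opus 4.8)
The plan is to translate Corollary~\ref{int4:krew}, exactly as the earlier interpretations were handled, into an identity between permanents of minors, and then to invoke Corollary~\ref{int3:krew}. First I would record the precise dictionary behind the already-noted fact that $\per(L^{\star}_{2m})$ enumerates $\mathcal{D}^2_{2m}$: a nonzero term $\prod_{i=1}^{2m}a_{i,\pi(i)}$ of $\per(L^{\star}_{2m})$ corresponds to the permutation $\sigma=\pi$ itself, because the $i$-th row of $L^{\star}_{2m}$ is precisely the indicator vector of the admissible values of $\sigma(i)$ (all values when $i=1$; the values $<2i$ in row $2i$; the values $\ge 2i-1$ in row $2i-1$). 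Under this dictionary $\sigma(1)$ is the column selected in the all-ones top row, while $\sigma(2)=1$ is forced, since the second row $\bar\gamma_1$ has its unique $1$ in column $1$; in particular $\sigma(1)\ne 1$, so the parameter $k$ given by $\sigma(1)=k+1$ ranges over $\{1,\ldots,2m-1\}$, matching the index range of Kreweras' triangle.

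Second I would peel off the two forced choices. Expanding $\per(L^{\star}_{2m})$ along the all-ones first row shows that $\#\{\sigma\in\mathcal{D}^2_{2m}:\sigma(1)=k+1\}$ equals the $(1,k+1)$-minor of $L^{\star}_{2m}$. In that minor the row coming from $\bar\gamma_1$ still equals $(1,0,\ldots,0)$, because column $1$ is retained (as $k+1\ge 2$), so that choice is again forced; equivalently, one may instead first delete the second row and the first column of $L^{\star}_{2m}$ — which, as already observed, produces the matrix $N=({\bf1}^T_{2m-1},\gamma_1,\bar\gamma_2,\gamma_3,\bar\gamma_4,\ldots,\gamma_{2m-3},\bar\gamma_{2m-2})^T$, a row rearrangement of $L^{\star}_{2m-1}$ — and then note that deleting column $1$ shifts the old column $k+1$ to column $k$ of $N$. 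Hence
$$\#\{\sigma\in\mathcal{D}^2_{2m}:\sigma(1)=k+1\}=\bigl((1,k)\text{-minor of }N\bigr)=\bigl((2m-1,k)\text{-minor of }L^{\star}_{2m-1}\bigr),$$
the last equality holding because deleting the all-ones row (row $1$ of $N$, row $2m-1$ of $L^{\star}_{2m-1}$) together with column $k$ yields literally the same matrix $(\gamma_1,\bar\gamma_2,\gamma_3,\bar\gamma_4,\ldots,\gamma_{2m-3},\bar\gamma_{2m-2})^T$ with its $k$-th column removed.

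Third I would close the loop with Corollary~\ref{int3:krew}: it says $\per(L^{\star}_{2m-1})$ counts the permutations $\sigma\in\S_{2m-1}$ with $\sigma(i)>i$ iff $i<2m-1$ is odd, and that $K_{2m-1,k}$ is the number of those with $\sigma(2m-1)=k$. Imposing $\sigma(2m-1)=k$ in the permanent amounts to selecting column $k$ in the all-ones bottom row of $L^{\star}_{2m-1}$, so $K_{2m-1,k}$ equals the $(2m-1,k)$-minor of $L^{\star}_{2m-1}$ — precisely the quantity computed in the previous step. Combining the two identities gives $\#\{\sigma\in\mathcal{D}^2_{2m}:\sigma(1)=k+1\}=K_{2m-1,k}$, which is the assertion.

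I do not expect any genuine obstacle here: all the combinatorial substance already resides in Corollary~\ref{int3:krew}, and the argument is a chain of permanent expansions along all-ones rows plus row rearrangements, which preserve the permanent. The only point demanding care is the index bookkeeping — verifying the row/column dictionary for $L^{\star}_{2m}$, checking that the forced column $1$ survives the deletion of column $k+1$ (so the reduction to $N$ is legitimate and the range of $k$ is exactly $\{1,\ldots,2m-1\}$), and tracking the one-step column shift $k+1\mapsto k$ produced by deleting the first column.
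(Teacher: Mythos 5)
Your proposal is correct and follows essentially the same route as the paper: it observes that the row $\bar\gamma_1=(1,0,\ldots,0)$ forces $\sigma(2)=1$, deletes that row together with column $1$ to obtain a row rearrangement of $L^{\star}_{2m-1}$, and then reads off the claim from Corollary~\ref{int3:krew} via the shift of column $k+1$ to column $k$. You simply spell out the index bookkeeping that the paper leaves implicit, so no further changes are needed.
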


\begin{remark}
A variation of Foata's first fundamental transformation\footnote{The original form of Foata's first fundamental transformation will be used in Section~\ref{tran:FS}.} (cf.~\cite[Prop. 1.3.1]{St}) establishes a one-to-one correspondence between $\{\sigma\in\mathcal{D}^2_{2m}: \sigma(1)=k+1\}$ and $\{\sigma\in\mathcal{D}_{2m+1}: \sigma(1)=k+1\}$, which provides an alternative approach to Corollary~\ref{int4:krew}. In fact, we first write $\sigma\in\mathcal{D}^2_{2m}$ as a product of disjoint cycles meeting the following requirements:
 \begin{itemize}
 \item[(a)] each cycle is written with the last element being the smallest one in the cycle;
 \item[(b)] the cycles are arranged in increasing order of their smallest elements.
 \end{itemize}
 Now, define $\hat\sigma$ as the word (or permutation) obtained  by first erasing the parentheses in the above form of $\sigma$ and then adding the letter $2m+1$ to the end. Then $\sigma\mapsto\hat\sigma$ is the desired bijection between $\mathcal{D}^2_{2m}$ and $\mathcal{D}_{2m+1}$ such that $\sigma(1)=\hat\sigma(1)$.
\end{remark}

\subsection{Proof of Conjecture~\ref{conj:Lus}}
 We first make some observations similar to Fact~\ref{fact} by determining the sign, rearranging the columns$\slash$rows, and deleting the two rows which contain a unique $1$ or $-1$, etc.. This leads us to consider the matrix $\tilde{M}_{2n}=[m_{i,j}]_{1\le i,j\le 2n}$ (induced from $M_{2n+2}$), where
\begin{align*}
m_{i,j}=\begin{cases}
1 & \text{if $\lceil\frac{i}{2}\rceil\le j\le n+\lceil\frac{i}{2}\rceil$,}\\
0 & \text{otherwise.}
\end{cases}
\end{align*}
The first three matrices of this type are given by
\begin{align*}
\tilde{M}_2=\begin{bmatrix}
1 & 1 \\
1 & 1
\end{bmatrix},
\;
\tilde{M}_4=\begin{bmatrix}
1 & 1 & 1 & 0\\
1 & 1 & 1 & 0\\
0 & 1 & 1 & 1\\
0 & 1 & 1 & 1
\end{bmatrix},
\;
\tilde{M}_6=\begin{bmatrix}
1 & 1 & 1 & 1 & 0 & 0\\
1 & 1 & 1 & 1 & 0 & 0\\
0 & 1 & 1 & 1 & 1 & 0\\
0 & 1 & 1 & 1 & 1 & 0\\
0 & 0 & 1 & 1 & 1 & 1\\
0 & 0 & 1 & 1 & 1 & 1
\end{bmatrix}.
\end{align*}
Conjecture~\ref{conj:Lus} has the following equivalent version:
\begin{align}
\per(\tilde{M}_{2n})=H_{2n+1}\ \ \text{ for all } n\ge 1.
\end{align}

A salient feature of the matrix $\tilde{M}_{2n}$ is that the $(2i-1)$-th and $2i$-th rows are identical, for each $1\le i\le n$. Therefore, when we expand the matrix to compute its permanent, our choices for the $(2i-1)$-th and $2i$-th rows can always be swapped. This is in agreement with the fact that $H_{2n+1}$ is divisible by $2^n$. Actually, the integers $h_n:=H_{2n+1}/2^n\ (n\ge1)$ are usually referred to as the {\it normalized median Genocchi numbers}.

The permanent $\per(\tilde{M}_{2n})$ can be interpreted as a sum over certain subset of permutations in $\S_{2n}$. Recall that $$\DES(\pi):=\{i\in[n-1]:\pi(i)>\pi(i+1)\}$$ is the {\it descent set} of the permutation $\pi\in\S_n$. Let $\tilde{\S}_{2n}$ be the set of permutations $\pi=\pi(1)\cdots\pi(2n)$ in $\S_{2n}$ for which $\{1,3,5,\ldots,2n-1\}\subseteq \DES(\pi)$ and
 $\lceil\frac{i}{2}\rceil\le \pi(i)\le n+\lceil\frac{i}{2}\rceil$.
By the previous discussions, $2^n\cdot|\tilde{\S}_{2n}|=\per(\tilde{M}_{2n})$. Hence it remains to show that
\begin{align}\label{id:Dellac}
|\tilde{\S}_{2n}|=h_n.
\end{align}

Indeed, there is a well-known combinatorial model in the literature called the {\it Dellac configuration} \cite{HZ,Big}, which is known \cite{Fei} to be enumerated by the normalized median Genocchi numbers $h_n$.

\begin{Def}
A Dellac configuration of size $n$ is a tableau of width $n$ and height $2n$ that contains $2n$ dots between the line $y=x$ and $y=n+x$, such that each row contains exactly one dot and each column contains exactly two dots.
\end{Def}

\begin{proof}[{\bf Proof of Conjecture~\ref{conj:Lus}}]
Let $\mathrm{DC}_n$ be the set of Dellac configurations of size $n$. Now given a configuration $C\in \mathrm{DC}_n$, we read the $y$-coordinates of the dots in $C$, from the leftmost column to the rightmost and top-down inside each column. It is easy to check that this list of coordinates forms precisely a permutation in $\tilde{\S}_{2n}$. This gives a bijection between $\mathrm{DC}_n$ and $\tilde{\S}_{2n}$, and thus \eqref{id:Dellac} holds true.
\end{proof}

\section{Proofs of Conjectures~\ref{conj:ChenA}\texorpdfstring{$\sim$}{}\ref{conj:Chen2}}\label{sec:conj of chen}

A permutation $\pi\in\S_n$ is called a {\it derangement} if $\pi(i)\not=i$ for all $i\in[n]$.
For convenience, we set
$$\D_n :=\{\pi\in\S_n:\pi(i)\neq i\text{ for all }i\in[n]\}. $$
Let us recall the following two classical results:
\begin{align}
\label{Euler-sign}
\sum_{\pi\in\S_n}(-1)^{\exc(\pi)}&=\begin{cases}
(-1)^mE_{2m+1} & \text{if $n=2m+1$},\\
0 & \text{if $n=2m$},
\end{cases}\\
\label{Roselle-sign}
\sum_{\pi\in\D_n}(-1)^{\exc(\pi)}&=\begin{cases}
0 & \text{if $n=2m+1$},\\
(-1)^mE_{2m} & \text{if $n=2m$},
\end{cases}
\end{align}
where $\exc(\pi)$ is the number of {\it excedances} of $\pi$, i.e.,
$$
\exc(\pi) :=|\{i\in [n]:\pi(i)>i\}|.
$$

Note that~\eqref{Euler-sign} and~\eqref{Roselle-sign} are due to Euler~\cite{Eul} and Roselle~\cite{Ros} respectively, and a joint combinatorial proof of them can be found in \cite[Chap.~5]{FS} (see also~\cite{LZ}). We emphasize that the proofs of Conjectures \ref{conj:ChenA}\texorpdfstring{$\sim$}{}\ref{conj:Chen2} hinge on linking these conjectures to the above two identities. In particular, the proof of Conjecture~\ref{conj:ChenA} is more involved and it requires block matrix decomposition and a variant of excedance that we denote as ``$\exph$''. This proof is thus given after the proofs of Conjectures~\ref{conj:Chen} and \ref{conj:Chen2}.

\subsection{Proof of Conjecture~\ref{conj:Chen}}

In view of the three matrices
\begin{align*}
P_2=\begin{bmatrix}
1 & 0 \\
0 & -1
\end{bmatrix},
\;
P_4=\begin{bmatrix}
1 & 1 & 1 & 0\\
1 & 1 & 0 & -1\\
1 & 0 & -1 & -1\\
0 & -1 & -1 & -1
\end{bmatrix},
\;
P_6=\begin{bmatrix}
1 & 1 & 1 & 1 & 1 & 0\\
1 & 1 & 1 & 1 & 0 & -1\\
1 & 1 & 1 & 0 & -1 & -1\\
1 & 1 & 0 & -1 & -1 & -1\\
1 & 0 & -1 & -1 & -1 & -1\\
0 & -1 & -1 & -1 & -1 & -1
\end{bmatrix},
\end{align*}
it is not hard to see the connection between the pattern of signs of $P_{2n}$ and the permutation statistic $\exc(\pi)$. More precisely, if we interpret each $-1$ entry of $P_{2n}$ sitting at the $i$-th row (counting from top to bottom) and the $j$-th column (counting from right to left) as an excedance $i=\pi(j)>j$, then
\begin{equation}\label{eq:signder}
\per(P_{2n})=\sum_{\pi\in\D_{2n}}(-1)^{\exc(\pi)}.
\end{equation}
Applying \eqref{Roselle-sign} we get $\per(P_{2n})=(-1)^nE_{2n}$.

A straightforward computation checks that the inverse matrix of $P_{2n}$ is $[x_{i,j}]_{1\le i,j \le 2n}$, where
$$x_{i,j}=\begin{cases}
0 & \text{if $i+j=2n+1$},\\
1 & \text{if $i+j<2n+1$ and $i\equiv j \pmod 2$,}\\
& \text{or $i+j>2n+1$ and $i\not\equiv j \pmod 2$},\\
-1 & \text{otherwise}.
\end{cases}$$
 The first three inverse matrices are
\begin{align*}
P^{-1}_2=\begin{bmatrix}
1 & 0 \\
0 & -1
\end{bmatrix},
\;
P^{-1}_4=\begin{bmatrix}
1 & -1 & 1 & 0\\
-1 & 1 & 0 & -1\\
1 & 0 & -1 & 1\\
0 & -1 & 1 & -1
\end{bmatrix},
\;
P^{-1}_6=\begin{bmatrix}
1 & -1 & 1 & -1 & 1 & 0\\
-1 & 1 & -1 & 1 & 0 & -1\\
1 & -1 & 1 & 0 & -1 & 1\\
-1 & 1 & 0 & -1 & 1 & -1\\
1 & 0 & -1 & 1 & -1 & 1\\
0 & -1 & 1 & -1 & 1 & -1
\end{bmatrix}.
\end{align*}

For each permutation $\pi\in\S_n$, an index $i\in [n]$ is called an {\it excedance of type $P$}, if either $\pi(i)>i$ and $\pi(i)-i$ is odd, or $\pi(i)<i$ and $\pi(i)-i$ is even. Let $\exc_P(\pi)$ denote the number of type $P$ excedances of $\pi$. Note that $\exc_P$ is not an Eulerian statistic, i.e., its distribution over $\S_n$ is different from that of $\exc$. Nonetheless, $\exc_P$ and $\exc$ have the same sign-balance over both $\S_n$ and $\D_n$.

\begin{theorem}
For each $n\ge 1$, we have
\begin{equation}\label{id:exc=excP over der}
%\label{id:exc=excP over perm}
\sum_{\pi\in\S_n}(-1)^{\exc(\pi)} = \sum_{\pi\in\S_n}(-1)^{\exc_P(\pi)}
\ \t{and}\ \sum_{\pi\in\D_n}(-1)^{\exc(\pi)} = \sum_{\pi\in\D_n}(-1)^{\exc_P(\pi)}.
\end{equation}
\end{theorem}

\begin{proof}
For our convenience, we shall consider inverse excedence ($\iexc(\pi):=\exc(\pi^{-1})$) instead of excedance. Note that $\exc$ and $\iexc$ are equidistributed over both $\S_n$ and $\D_n$.

The idea is to begin with the cycle decomposition of certain permutation $\pi\in\S_{n-1}$, and insert $n$ into $\pi$, say between $i$ and $\pi(i)$, to get a new permutation $\pi'\in\S_n$.

\begin{table}[htb]
{%\small
\begin{tabular*}{3.8in}{cccccc}
\toprule
$i$ & $\pi(i)$ & $\pi(i)-i$ & $n$ & $\iexc$ & $\exc_P$\\
\midrule
even & even & $>0$ & even/odd & $+1$ & $+1$\\
even & even & $<0$ & even/odd & $+0$ & $+0$\\
even & odd & $>0$ & even & $+1$ & $-1$\\
even & odd & $>0$ & odd & $+1$ & $+1$\\
even & odd & $<0$ & even & $+0$ & $+0$\\
even & odd & $<0$ & odd & $+0$ & $+2$\\
odd & even & $>0$ & even & $+1$ & $+1$\\
odd & even & $>0$ & odd & $+1$ & $-1$\\
odd & even & $<0$ & even & $+0$ & $+2$\\
odd & even & $<0$ & odd & $+0$ & $+0$\\
odd & odd & $>0$ & even/odd & $+1$ & $+1$\\
odd & odd & $<0$ & even/odd & $+0$ & $+0$\\
even/odd & even/odd & $=0$ & even/odd & $+1$ & $+1$\\
\bottomrule\\
\end{tabular*}
}
\caption{Various cases of inserting $n$ between $i$ and $\pi(i)$}
\end{table}

The rest of the proof is to verify, in a case-by-case fashion, that the parity changes of both $\iexc$ and $\exc_P$ are the same, when we go from $\pi$ to $\pi'$. When $n$ is a fixed point for $\pi'$, we see neither $\iexc$ nor $\exc_P$ changes in this case. We collect the remaining cases in the above table so that the readers can easily check for themselves. The bottom case in the table means that $n$ joins a fixed point $\pi(i)=i$ of $\pi$ to form a $2$-cycle $(i~n)$ of $\pi'$.
\end{proof}

By the definitions of the inverse matrix $P^{-1}_{2n}$ and the new statistic $\exc_P$, we have
$$\per(P^{-1}_{2n})=\sum_{\pi\in\D_{2n}}(-1)^{\exc_P(\pi)}.$$
This result combined with \eqref{Roselle-sign} and the second identity in \eqref{id:exc=excP over der} proves the equality $\per(P^{-1}_{2n})=(-1)^nE_{2n}$.

There is another elementary proof of the identity
\begin{equation}\label{Hadamard}
\per(P_{2n})=\per(P^{-1}_{2n})
\end{equation}
that we are going to provide below. The following elementary transformations concerning evaluations  of permanents are clear.

\begin{lemma}\label{ele:per}
Let $A=[a_{i,j}]_{1\leq i,j\leq n}$ be an $n\times n$ matrix over a commutative ring $R$ with identity.
\begin{enumerate}[{\rm (1)}]
\item For any $1\leq k\leq n$,  the matrix obtained from $A$ by multiplying each entry in the $k$-th column (or row) by $c\in R$ has permanent that equals $c\cdot\per(A)$.
\item Exchanging any two columns (or rows) of $A$ preserves the permanent.
\end{enumerate}
\end{lemma}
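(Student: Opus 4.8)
The plan is to verify both items directly from the defining sum $\per(A)=\sum_{\pi\in\S_n}\prod_{i=1}^n a_{i,\pi(i)}$, exploiting that the permanent — unlike the determinant — carries no sign, so column/row permutations act transparently.

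For item (1), suppose we scale the $k$-th column by $c$, obtaining $A'=[a'_{i,j}]$ with $a'_{i,k}=c\,a_{i,k}$ and $a'_{i,j}=a_{i,j}$ for $j\neq k$. For each $\pi\in\S_n$ there is a unique index $i_0$ with $\pi(i_0)=k$, so $\prod_{i=1}^n a'_{i,\pi(i)} = c\prod_{i=1}^n a_{i,\pi(i)}$; summing over $\pi$ and factoring out $c$ gives $\per(A')=c\,\per(A)$. The row version is identical after noting $\per(A)=\per(A^T)$ (which itself follows from reindexing the sum by $\pi\mapsto\pi^{-1}$), or directly: if we scale the $k$-th row, every monomial picks up exactly one factor $c$ from the term $a_{k,\pi(k)}$.

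For item (2), suppose we swap columns $k$ and $\ell$ of $A$ to form $B$, i.e.\ $B=A\cdot\tau$ where $\tau=(k\ \ell)$ acts by permuting columns; concretely $b_{i,j}=a_{i,\tau(j)}$. Then
\begin{align*}
\per(B)=\sum_{\pi\in\S_n}\prod_{i=1}^n b_{i,\pi(i)}=\sum_{\pi\in\S_n}\prod_{i=1}^n a_{i,\tau(\pi(i))}=\sum_{\sigma\in\S_n}\prod_{i=1}^n a_{i,\sigma(i)}=\per(A),
\end{align*}
where we substituted $\sigma=\tau\circ\pi$, using that $\pi\mapsto\tau\circ\pi$ is a bijection of $\S_n$ (no sign appears, in contrast to the determinant where $\sign(\tau\pi)=-\sign(\pi)$). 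Swapping two rows is handled the same way by composing on the other side, $\pi\mapsto\pi\circ\tau$, or again by transposing.

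There is essentially no obstacle here — the statement is \emph{clear}, as the paper says, and the only point worth making explicit is the change-of-variable in the symmetric group that makes the absence of a sign in the permanent do all the work. The one thing to be slightly careful about is keeping the row-case and column-case bookkeeping straight (left versus right composition by the transposition), but both reduce immediately to the bijectivity of multiplication in $\S_n$.
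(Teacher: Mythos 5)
Your proof is correct and is exactly the routine verification from the definition of $\per$ that the paper has in mind when it declares the lemma ``clear'' and gives no proof at all. Both the factoring of $c$ via the unique index $i_0$ with $\pi(i_0)=k$ and the change of variable $\sigma=\tau\circ\pi$ in $\S_n$ are sound, so nothing further is needed.
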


\begin{Def}[Hadamard product]  For two matrices $A=[a_{i,j}]_{1\leq i,j\leq n}$ and $B=[b_{i,j}]_{1\leq i,j\leq n}$ over a ring, the Hadamard product of $A$ and $B$ is $A\circ B=[c_{i,j}]_{1\leq i,j\leq n}$ with $c_{i,j}=a_{i,j}b_{i,j}$.
\end{Def}

\begin{Def}[An action on matrix]\label{Def:action}
Let $A=[a_{i,j}]_{1\leq i,j\leq n}$ be an $n\times n$ matrix over  $\mathbb{R}$. For $1\leq k,l\leq n$, define  $\phi_{k,l}(A)$ to be the matrix obtained from $A$ by multiplying the $k$-th row and the $l$-th column by $-1$. Note that the $(k,l)$-entry of $\phi_{k,l}(A)$ remains $a_{k,l}$, since it multiplies  $-1$ twice. By Lemma~\ref{ele:per}~(1), we have
\begin{equation}\label{action}
\per(\phi_{k,l}(A))=\per(A).
\end{equation}
\end{Def}
The usefulness of $\phi_{k,l}$ is demonstrated by the next lemma, and we shall derive further properties of this action in the last section; see Proposition~\ref{char-action}.
\begin{lemma}\label{H:sign}
Let $A=[a_{i,j}]_{1\leq i,j\leq n}$ be an $n\times n$ matrix over  $\mathbb{R}$. Then
$$
\biggl(\prod_{k+l\leq n}\phi_{k,l}\biggr)(A)=A\circ H_n,
$$
where $H_n=[h_{i,j}]_{1\leq i,j\leq n}$ with $h_{i,j}=(-1)^{i+j}$. Consequently, $\per(A)=\per(A\circ H_n)$.
\end{lemma}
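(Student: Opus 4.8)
The statement to prove is Lemma~\ref{H:sign}: that applying the composite operation $\prod_{k+l\le n}\phi_{k,l}$ to an $n\times n$ matrix $A$ produces the Hadamard product $A\circ H_n$ with $H_n=[(-1)^{i+j}]$, and hence $\per(A)=\per(A\circ H_n)$. The idea is simply to track, for each entry $(i,j)$, how many times it gets multiplied by $-1$ as we run through all factors $\phi_{k,l}$ with $k+l\le n$. By Definition~\ref{Def:action}, $\phi_{k,l}$ multiplies every entry in row $k$ by $-1$ and every entry in column $l$ by $-1$; so the entry $(i,j)$ is negated by $\phi_{k,l}$ exactly when $k=i$ (contributing the row flip) or $l=j$ (contributing the column flip), and these two events are counted separately (the diagonal entry $(k,l)$ itself gets hit twice by $\phi_{k,l}$, which is why it is unchanged).

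First I would observe that since each $\phi_{k,l}$ is a composition of column/row scalings by $-1$, all these operations commute, so the composite $\bigl(\prod_{k+l\le n}\phi_{k,l}\bigr)(A)$ is well-defined and its effect on entry $(i,j)$ is multiplication by $(-1)^{N_{i,j}}$, where
\[
N_{i,j}=\#\{(k,l): k+l\le n,\ k=i\}+\#\{(k,l): k+l\le n,\ l=j\}.
\]
The first count is $\#\{l\ge 1: l\le n-i\}=\max(n-i,0)=n-i$ (valid since $1\le i\le n$), and similarly the second count is $n-j$. Hence $N_{i,j}=2n-i-j\equiv i+j\pmod 2$, so the entry $(i,j)$ of the transformed matrix is $(-1)^{i+j}a_{i,j}=h_{i,j}a_{i,j}$, which is exactly $A\circ H_n$. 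The final assertion $\per(A)=\per(A\circ H_n)$ is then immediate by iterating \eqref{action}.

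**Main obstacle.** There is essentially no obstacle here — the lemma is bookkeeping. The only point requiring a moment's care is the convention: one must confirm that $\phi_{k,l}$ truly negates row $k$ and column $l$ independently (so the count is additive, $2n-i-j$, rather than some inclusion–exclusion count like $\#\{k=i \text{ or } l=j\}$), which is precisely what Definition~\ref{Def:action} stipulates and what makes the diagonal entries fixed. A secondary cosmetic check is that the set $\{(k,l):k+l\le n,\ k,l\ge 1\}$ is nonempty only when $n\ge 2$; for $n=1$ the product is empty and $H_1=[1]$, so the identity holds trivially.
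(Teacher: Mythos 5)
Your proof is correct and follows essentially the same route as the paper's: both count the number of sign changes of the $(i,j)$ entry under $\prod_{k+l\le n}\phi_{k,l}$ as $(n-i)+(n-j)=2n-(i+j)$, which has the parity of $i+j$, and then invoke \eqref{action}. Your extra remarks on commutativity and the trivial $n=1$ case are fine but not needed.
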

\begin{proof}
The number of times that the entry $a_{i,j}$ changes its sign under $\prod_{k+l\leq n}\phi_{k,l}$ equals $n-i+n-j=2n-(i+j)$, which has the same parity as $i+j$. This proves the first statement in Lemma \ref{H:sign}. The second statement follows from the first one and the equality \eqref{action}.
\end{proof}

In view of the relation
$$
P^{-1}_{2n}=P_{2n}\circ H_{2n},
$$
 Lemma~\ref{H:sign} provides an alternative approach to~\eqref{Hadamard}.

 \subsection{Proof of Conjecture~\ref{conj:Chen2}}
 The first few terms of $Q_n$ are
 \begin{align*}
 &Q_1=\begin{bmatrix}
 -1
\end{bmatrix},
\;
Q_2=\begin{bmatrix}
0 & -1 \\
-1 & 0
\end{bmatrix},
\;
Q_3=\begin{bmatrix}
1 & -1 & -1 \\
0& -1  & 0\\
-1& -1 &1
\end{bmatrix},
\;
Q_4=\begin{bmatrix}
1 & 0 & -1 & -1\\
1 & -1 & -1 & 0\\
0 & -1 & -1 & 1\\
-1 & -1 & 0 & 1
\end{bmatrix},
\;\\
& Q_5=\begin{bmatrix}
1 & 1 & -1 & -1&-1\\
1 & 0 &-1 & -1 & 0\\
1 & -1 & -1 &-1 & 1\\
0& -1 & -1 & 0 & 1\\
-1&-1&-1&1&1
\end{bmatrix},
\quad
Q_6=\begin{bmatrix}
1 & 1 & 0 & -1 & -1 & -1\\
1 & 1 & -1 & -1 & -1 & 0 \\
1 & 0 & -1 & -1 & -1 & 1\\
1 & -1 & -1 & -1 & 0 & 1\\
0 & -1& -1 & -1 & 1 & 1\\
 -1 & -1 & -1 & 0 & 1 & 1
\end{bmatrix},\\
&Q_7=\begin{bmatrix}
1 & 1 & 1 & -1 & -1 & -1& -1\\
1 & 1 & 0 & -1 & -1 & -1 & 0 \\
1 & 1 & -1 & -1 & -1 &-1 & 1\\
1 &0& -1 & -1 & -1 & 0 & 1\\
1 & -1 & -1 & -1 & -1 &1 & 1\\
0 & -1& -1 & -1 &0& 1 & 1\\
 -1 & -1 & -1 & -1 & 1 & 1 & 1
\end{bmatrix},
\quad
Q_8=\begin{bmatrix}
1 & 1 & 1 &0& -1 & -1 & -1& -1\\
1 & 1 & 1 &-1 & -1 & -1 & -1 & 0 \\
1 & 1 & 0& -1 & -1 & -1 &-1 & 1\\
1 &1& -1 & -1 & -1 &-1 & 0 & 1\\
1 & 0&  -1 & -1 & -1 & -1 &1 & 1\\
1 &-1& -1 & -1 & -1  & 0 & 1&1\\
0 & -1& -1 & -1 & -1& 1& 1 & 1\\
 -1 & -1 & -1 & -1 & 0 & 1 & 1 & 1
\end{bmatrix}.
\end{align*}

For $0\leq i\leq n-1$, let $\alpha_{n,i}$ denote the column vector of dimension $n$ whose $(n-i)$-th entry is $0$ and all entries above (resp.~below) this zero entry are $1$ (resp.~$-1$). For example, $\alpha_{6,2}=(1,1,1,0,-1,-1)^T$. For the sake of simplicity,  we write $\alpha_{n,i}$ as $\alpha_i$ when $n$ is fixed.
The matrix $Q_n$ has the following structure.

\begin{lemma}\label{mat:Q}
We have
$$Q_{2n}=(\alpha_{1},\alpha_{3},\ldots,\alpha_{2n-1},-\alpha_0,-\alpha_2,\ldots,-\alpha_{2n-2})$$
and $$Q_{2n+1}=(\alpha_{1},\alpha_{3},\ldots,\alpha_{2n-1},-{\bf1}_{2n+1},-\alpha_1,-\alpha_3,\ldots,-\alpha_{2n-1}).$$
\end{lemma}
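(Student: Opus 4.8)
The plan is to read off the $(i,j)$-entry of $Q_n$ explicitly, one column at a time. Set $m=n+1$, so that the $(i,j)$-entry is $\sgn(\sin(\pi(i+2j)/m))$. The first ingredient is the elementary fact that for an integer $t$,
$$\sgn\!\left(\sin\frac{\pi t}{m}\right)=\begin{cases}0 & \text{if }m\mid t,\\ (-1)^{\lfloor t/m\rfloor} & \text{if }m\nmid t,\end{cases}$$
which follows by writing $t=qm+r$ with $0\le r<m$ and using $\sin(q\pi+\pi r/m)=(-1)^q\sin(\pi r/m)$ together with $\sin(\pi r/m)>0$ for $0<r<m$.

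Next I would fix $j\in[n]$ and let $i$ run through $1,\dots,m-1=n$, so that $i+2j$ runs through the $m-1$ consecutive integers $2j+1,\dots,2j+m-1$. This window contains at most one multiple of $m$. If $m\nmid 2j$ it contains exactly one, namely $km$ with $k=\lfloor 2j/m\rfloor+1$, occurring at row $i^\ast:=km-2j=m-(2j\bmod m)$, where $Q_n$ has a $0$. Since $\lfloor(i+2j)/m\rfloor$ equals $k-1$ for $i<i^\ast$ and $k$ for $i>i^\ast$, the $j$-th column of $Q_n$ has $(-1)^{k-1}$ in rows $1,\dots,i^\ast-1$, a $0$ in row $i^\ast$, and $(-1)^{k}$ in rows $i^\ast+1,\dots,m-1$; recalling that $\alpha_{n,s}$ has its $0$ in row $n-s=m-1-s$, this column is exactly $(-1)^{k-1}\alpha_{n,(2j\bmod m)-1}$. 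If instead $m\mid 2j$ the window has no multiple of $m$, so the column is the constant vector $(-1)^{2j/m}{\bf1}_n$.

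It remains to specialise to the two parities. For $Q_{2n}$ we have $m=2n+1$, which is odd, so $m\nmid 2j$ for every $j\in[2n]$: when $1\le j\le n$ one finds $k=1$, giving the column $\alpha_{2n,2j-1}$, and when $j=n+j'$ with $1\le j'\le n$ one finds $k=2$ and $2j\bmod m=2j'-1$, giving $-\alpha_{2n,2j'-2}$. For $Q_{2n+1}$ we have $m=2n+2$, which is even: when $1\le j\le n$ again $k=1$ and the column is $\alpha_{2n+1,2j-1}$; when $j=n+1$ we have $m\mid 2j$ and the column is $-{\bf1}_{2n+1}$; and when $j=n+1+j'$ with $1\le j'\le n$ one finds $k=2$ and $2j\bmod m=2j'$, giving $-\alpha_{2n+1,2j'-1}$. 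Lining up these columns in order yields the two displayed factorisations.

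Every step here is a routine computation; the only part that needs genuine care is the index bookkeeping in the second paragraph — pinning down $k=\lfloor 2j/m\rfloor+1$, the row $i^\ast=m-(2j\bmod m)$ of the zero entry, the translation between $i^\ast$ and the subscript $(2j\bmod m)-1$ of $\alpha_{n,\cdot}$, and the parity of $k$ that fixes the overall sign. Working throughout with $m=n+1$ rather than $n$, and treating the boundary columns $j=n$ (and $j=n+1$ in the odd case) separately, should keep this manageable.
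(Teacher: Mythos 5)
Your proof is correct: the sign formula $\sgn(\sin(\pi t/m))=(-1)^{\lfloor t/m\rfloor}$ (or $0$), the location $i^\ast=m-(2j\bmod m)$ of the zero, and the resulting identifications of each column with $\pm\alpha_{\cdot}$ or $-{\bf 1}$ all check out against the displayed examples. The paper states this lemma as a direct verification from the definition without writing out a proof, and your entrywise computation is exactly that verification, carried out carefully.
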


The following lemma is clear from the definition of permanents.
\begin{lemma}\label{obs:rep}
For a matrix with two identical columns having one zero at the same position, replacing the zero by $1$ and the other zeros by $-1$ does not change its permanent.
\end{lemma}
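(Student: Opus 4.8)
The plan is to compare the two permanents by expanding along the single row that holds the two equal zeros. Let $A=[a_{i,j}]_{1\le i,j\le n}$ be the given matrix, let $p\neq q$ be the indices of the two repeated columns (so that $a_{i,p}=a_{i,q}$ for every $i$), and let $r$ be the common row with $a_{r,p}=a_{r,q}=0$. Let $B$ be the matrix obtained from $A$ by setting the $(r,p)$-entry to $1$ and the $(r,q)$-entry to $-1$, leaving every other entry unchanged. Expanding $\per$ along row $r$, the only summands that can differ between $A$ and $B$ are those permutations $\pi\in\S_n$ with $\pi(r)=p$ or $\pi(r)=q$, since $A$ and $B$ agree in all other entries.

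For $A$ both of these families of terms vanish, because $a_{r,p}=a_{r,q}=0$. For $B$ they contribute $b_{r,p}$ times the $(r,p)$-minor permanent plus $b_{r,q}$ times the $(r,q)$-minor permanent. Writing $A(\hat r\mid\hat p)$ for the submatrix of $A$ obtained by deleting row $r$ and column $p$ (and similarly $A(\hat r\mid\hat q)$ for deleting row $r$ and column $q$), and using $b_{r,p}=1$ and $b_{r,q}=-1$, I obtain
\begin{equation*}
\per(B)-\per(A)=\per\bigl(A(\hat r\mid\hat p)\bigr)-\per\bigl(A(\hat r\mid\hat q)\bigr).
\end{equation*}

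The crux is that these two minors have the same permanent. Deleting row $r$ from $A$ leaves columns $p$ and $q$ still identical, since the repetition was untouched outside row $r$. Writing $c_i'$ for column $i$ of $A$ with its entry in row $r$ removed, the matrix $A(\hat r\mid\hat p)$ has column multiset $\{c_i':i\neq p\}$ while $A(\hat r\mid\hat q)$ has column multiset $\{c_i':i\neq q\}$; because $c_p'=c_q'$, deleting one copy of this common vector yields the same multiset in both cases. Since the permanent is invariant under permuting columns (Lemma~\ref{ele:per}(2)), any two matrices with the same multiset of columns have equal permanent, so $\per(A(\hat r\mid\hat p))=\per(A(\hat r\mid\hat q))$ and therefore $\per(B)=\per(A)$.

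I expect no serious obstacle: the argument is a two-term row expansion, and the only substantive point is the equality of the two minor permanents, which follows immediately from column-permutation invariance once one notices that deleting row $r$ preserves the equality of columns $p$ and $q$. The sign choice $+1$ versus $-1$ is precisely what makes the two (equal) minor permanents cancel in the displayed difference.
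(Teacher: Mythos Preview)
Your argument is correct: the row-$r$ expansion isolates exactly the two terms that can change, and the equality of the two minor permanents follows because deleting row $r$ leaves columns $p$ and $q$ identical, so deleting either one yields the same multiset of columns. The paper itself supplies no proof for this lemma---it simply asserts that the statement ``is clear from the definition of permanents''---so your write-up is a clean, explicit verification of what the authors leave to the reader, and there is nothing to compare beyond that.
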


\begin{proof}[{\bf Proof of Conjecture~\ref{conj:Chen2}}]
In view of \eqref{P2n},
\begin{equation}\label{eq:signder1}
\per(P_{2n})=(-1)^nE_{2n}.
\end{equation}
By Lemmas~\ref{ele:per} and~\ref{mat:Q}, we have
\begin{align*}
\per(Q_{2n})&=(-1)^n\per(\alpha_{1},\alpha_{3},\ldots,\alpha_{2n-1},\alpha_0,\alpha_2,\ldots,\alpha_{2n-2})\\
&=(-1)^n\per(\alpha_{0},\alpha_{1},\ldots,\alpha_{2n-1})=(-1)^n\per(P_{2n}).
\end{align*}
Combining this with \eqref{eq:signder1}, we see that Conjecture~\ref{conj:Chen2} holds for even $n$.

It remains to deal with Conjecture~\ref{conj:Chen2} for odd $n$. Let $\beta_i$ be the column vector obtained by replacing the only zero in $\alpha_i$ by $1$. By Lemmas~\ref{ele:per} and~\ref{obs:rep}, we have
\begin{align*}
\per(Q_{2n+1})&=(-1)^{n}\per(\alpha_{1},\alpha_{3},\ldots,\alpha_{2n-1},-{\bf1}_{2n+1},\alpha_1,\alpha_3,\ldots,\alpha_{2n-1})\\
&=(-1)^{n}\per(\alpha_{1},\alpha_{1},\alpha_{3},\alpha_{3},\ldots,\alpha_{2n-1},\alpha_{2n-1},-{\bf1}_{2n+1})\\
&=(-1)^n\per(\beta_1,\beta_2,\ldots,\beta_{2n-1},\beta_{2n},-{\bf1}_{2n+1})\\
&=(-1)^n\sum_{\sigma\in\S_{2n+1}}(-1)^{\wexc(\sigma)},
\end{align*}
where $\wexc(\sigma):=|\{i\in[2n+1]: \sigma(i)\geq i\}|$ is the number of weak excedances of $\sigma$.
In view of the permutation interpretation~\eqref{Euler-sign} of tangent numbers, we have
\begin{align*}
\sum_{\sigma\in\S_{2n+1}}(-1)^{\wexc(\sigma)}&=\sum_{\sigma\in\S_{2n+1}}(-1)^{2n+1-|\{i\in[2n+1]:\, \sigma(i)< i\}|}\\
&=-\sum_{\sigma\in\S_{2n+1}}(-1)^{|\{i\in[2n+1]:\, \sigma(i)< i\}|}\\
&=-\sum_{\sigma\in\S_{2n+1}}(-1)^{\exc(\sigma^{-1})}\\
&=-\sum_{\sigma\in\S_{2n+1}}(-1)^{\exc(\sigma)}=(-1)^{n+1}E_{2n+1},
\end{align*}
which proves the odd case of Conjecture~\ref{conj:Chen2}.
\end{proof}

\subsection{Proof of Conjecture~\ref{conj:ChenA}}
We will make use of the multiplication of block matrices to facilitate our computation. For every $n\ge 1$, we introduce two $n\times n$ matrices:
$$I_n=\begin{bmatrix}
1 & 0 & \cdots & 0\\
0 & \ddots & \ddots & \vdots\\
\vdots & \ddots & \ddots & 0\\
0 & \cdots & 0 & 1
\end{bmatrix}\ \ \text{and}\ \
J_n=\begin{bmatrix}
0 & \cdots & 0 & 1\\
\vdots & \iddots & \iddots & 0\\
0 & \iddots & \iddots & \vdots\\
1 & 0 & \cdots & 0
\end{bmatrix}.$$
Recall the matrix $P_n=[\sgn(\sin(\frac{i+j}{n+1}\pi))]_{1\le i,j\le n}$ defined in Conjecture~\ref{conj:Chen}. It is easy to verify that
\begin{align}
\label{id:IJP}
J_n^2=I_n\quad \t{and}\quad J_nP_n=-P_nJ_n.
\end{align}

We derive the following block decomposition of the matrix $A_{2n}$ from its definition:
\begin{align*}
A_{2n}=\begin{bmatrix}
P_n-J_n & -P_n \\
-P_n & P_n+J_n
\end{bmatrix}.
\end{align*}
 With the aid of \eqref{id:IJP}, it is fairly easy to check that the inverse matrix of $A_{2n}$ has the following block decomposition:
\begin{align*}
A^{-1}_{2n}=\begin{bmatrix}
P_n-J_n & P_n \\
P_n & P_n+J_n
\end{bmatrix}.
\end{align*}
In other words, $A^{-1}_{2n}=[\tilde{a}_{i,j}]_{1\le i,j\le 2n}$, where
\begin{align*}
\tilde{a}_{i,j}:=\begin{cases}
0 & \text{if $i+j=2n+1$},\\
-1 & \text{if $i+j\ge n+1$ and $\max(i,j)\le n$},\\
 & \text{or if $i+j\ge 2n+2$ and $i\le n$},\\
 & \text{or if $i+j\ge 2n+2$ and $j\le n$},\\
 & \text{or if $i+j\ge 3n+2$},\\
1 & \text{otherwise}.
\end{cases}
\end{align*}
Comprehending the sign pattern of $A^{-1}_{2n}$, we are naturally led to define the following variant of excedance, that we call the number of {\it excedances induced by $\phi$} (a bijection to be defined later), and denoted as $\exph(\pi)$ for each permutation $\pi\in\S_{2n}$. Each $i\in[2n]$ is said to be an excedance of $\pi$ induced by $\phi$, if it falls in one of the following situations.
\begin{enumerate}[I.]
  \item $1\le i<\pi(i)\le n$;
  \item $n+1\le i\le 2n$ and $i-n\le \pi(i)\le n$;
  \item $n+1\le i<\pi(i)\le 2n$;
  \item $1\le i<\pi(i)-n$ and $n+1\le \pi(i)\le 2n$.
\end{enumerate}

Now we can interprete $\per(A_{2n}^{-1})$ as the signed sum over all derangements of $[2n]$. Namely, each $\pi\in\D_{2n}$ corresponds to one term in the expansion of $\per(A_{2n}^{-1})$, and each pair $(i,\pi(i))$ corresponds to the entry in the $\pi(i)$-th (from top to bottom) row and the $i$-th (from right to left) column of $A_{2n}^{-1}$. This entry is $-1$ if and only if $i$ is an excedance of $\pi$ introduced by $\phi$; otherwise it is $1$. In other words, we have
\begin{align}\label{id:per=exph over der}
\per(A_{2n}^{-1})=\sum_{\pi\in\D_{2n}}(-1)^{\exph(\pi)}.
\end{align}

We give an example to illustrate the above relation.
\begin{example}
The six entries in $A_6^{-1}$ that correspond to the derangement $\pi=315624$ have been colored blue. Note that $\exph(\pi)=3$ since $\pi$ has excedances introduced by $\phi$ at $1$ (case I), $4$ (case III), and $5$ (case II).
\begin{align*}
A_6^{-1}=\begin{bmatrix}
1 & 1 & -1 & 1 & \blue{1} & 0\\
1 & \blue{-1} & -1 & 1 & 0 & -1\\
-1 & -1 & -1 & 0 & -1 & \blue{-1}\\
\blue{1} & 1 & 0 & 1 & 1 & 1\\
1 & 0 & -1 & \blue{1} & 1 & -1\\
0 & -1 & \blue{-1} & 1 & -1 & -1
\end{bmatrix}
\quad \Longrightarrow
\quad  \exph(\pi)=3\ \text{for}\ \pi=315624.
\end{align*}
\end{example}

Relying on the connection \eqref{id:per=exph over der}, we proceed to show that
\begin{align}
\per(A^{-1}_{2n})=(-1)^nE_{2n}.
\label{A^-1=secant}
\end{align}
Indeed, we will construct a bijection to show the following equidistribution result, which immediately implies \eqref{A^-1=secant} when we set $t=-1$ and $y=0$, and apply \eqref{Roselle-sign}. For any $\pi\in\S_n$, let $\Fix(\pi)=\{i\in[n]:\pi(i)=i\}$ be the set of fixed points of $\pi$ and $\fix(\pi)=|\Fix(\pi)|$ be its cardinality.
\begin{theorem}\label{thm:bij for A^-1}
For each $n\ge 1$, we have
\begin{align}
\label{id:exph and fix}
\sum_{\pi\in\S_{2n}}t^{\exc(\pi)}y^{\fix(\pi)}=\sum_{\pi\in\S_{2n}}t^{\exph(\pi)}y^{\fix(\pi)}.
\end{align}
\end{theorem}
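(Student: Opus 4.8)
The plan is to construct an explicit bijection $\phi\colon\S_{2n}\to\S_{2n}$ that preserves the number of fixed points and sends $\exc$ to $\exph$; this is exactly the bijection alluded to in the name ``excedances induced by $\phi$''. The natural domain for such a construction is the cycle structure, so I would proceed by the same insertion/induction philosophy already used in the proof of \eqref{id:exc=excP over perm}: build $\pi\in\S_{2n}$ by successively inserting the values $1,2,\ldots,2n$ (or by a suitable relabelling of $[2n]$ as two interleaved copies of $[n]$) into a growing cycle diagram, and track how a single insertion changes $\exc$ on the one side and $\exph$ on the other. The four cases I--IV defining $\exph$ are precisely the four ``quadrants'' one gets after splitting each index $i\in[2n]$ according to whether $i\le n$ or $i>n$ and comparing $\pi(i)$ with $i$ both in the ``same half'' and ``across halves'' sense; this strongly suggests that $\phi$ should act on a permutation of $[2n]$ by reading it as a lattice-path or as a permutation of the doubled alphabet and then applying a local rearrangement half-by-half.

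Concretely, here are the steps I would carry out. First, re-express $\exph(\pi)$ in a cleaner closed form: writing each $i\in[2n]$ as $i=\epsilon n+r$ with $\epsilon\in\{0,1\}$ and $r\in[n]$, one checks from the four cases that $i$ is a $\phi$-excedance of $\pi$ iff a certain comparison between the ``half-index'' of $i$ and the ``half-index'' of $\pi(i)$ holds, so that $\exph$ becomes an honest (weak/strict mixed) excedance-type statistic on a related object. Second, identify the target bijection: I expect $\phi$ to be of the form ``apply Foata's fundamental transformation (or its inverse) composed with a fixed involution of the alphabet'', since such compositions are the standard tool for turning $\exc$ into variant excedance statistics while fixing $\fix$; the involution on $[2n]$ should be the order-reversing map within each half, or the swap $r\leftrightarrow n+1-r$, dictated by the $i+j=2n+1\mapsto 0$ antidiagonal visible in $A_{2n}^{-1}$. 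Third, verify the two required properties: that $\phi$ fixes $\fix$ (immediate if $\phi$ is built from cycle-preserving maps and alphabet involutions that fix no... rather, that conjugate fixed points to fixed points), and that $\exc(\pi)=\exph(\phi(\pi))$, which I would prove by the insertion induction, producing a case table entirely analogous to the one already displayed for $\iexc$ versus $\exc_P$. Finally, specialize $t=-1$, $y=0$ in \eqref{id:exph and fix}, combine with \eqref{id:per=exph over der} and \eqref{Roselle-sign} to conclude \eqref{A^-1=secant}, and hence (together with $\per(A_{2n})=\per(A_{2n}^{-1})$, which one gets by the same block-matrix reasoning or a Hadamard-product argument as in Lemma \ref{H:sign}) the full Conjecture \ref{conj:ChenA}.

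The main obstacle I anticipate is pinning down the \emph{correct} bijection $\phi$ so that the fixed-point statistic is genuinely preserved: the four-case definition of $\exph$ treats the two halves $\{1,\ldots,n\}$ and $\{n+1,\ldots,2n\}$ asymmetrically (cases II and IV cross between halves, cases I and III stay within a half), and a naive ``relabel and apply Foata'' will typically scramble $\fix$ unless the alphabet involution is chosen to fix exactly the right set pointwise or to be compatible with the cycle decomposition. Getting this compatibility right — equivalently, checking that every line of the insertion case-table has matching parity changes for $\exc$ and $\exph$, including the subtle rows where $i$ and $\pi(i)$ lie in different halves — is where the real work lies; once the table closes, the rest is bookkeeping. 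A secondary (but routine) point is confirming the claimed closed form for $A_{2n}^{-1}$ and the identity \eqref{id:per=exph over der}, which amount to the block computation $A_{2n}A_{2n}^{-1}=I_{2n}$ using \eqref{id:IJP} together with careful indexing of rows ``top to bottom'' against columns ``right to left''.
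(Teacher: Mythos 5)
Your overall strategy---realize $\Phi$ as conjugation by an explicit relabelling of the alphabet $[2n]$, so that $\fix$ is preserved for free and $\exc$ is transported to $\exph$---is the paper's approach, and your remark about ``two interleaved copies of $[n]$'' points in the right direction. But the proposal stops short of the one ingredient that constitutes the proof: the explicit map. The candidates you put forward do not work. Conjugating by the order-reversing swap $r\leftrightarrow n+1-r$ within each half sends an excedance $i<\pi(i)$ with $i,\pi(i)\le n$ to an \emph{anti}-excedance of the first half, whereas case I of $\exph$ demands $1\le i<\pi(i)\le n$; and composing with Foata's fundamental transformation is both unnecessary and counterproductive here, since Foata's map trades $\exc$ for a descent-type statistic and does not commute simply with $\fix$. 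The map that works is the riffle $\phi(2k-1)=n+k$, $\phi(2k)=k$ for $1\le k\le n$, with $\Phi(\pi)=\phi\circ\pi\circ\phi^{-1}$.

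Once this $\phi$ is written down, everything you flag as ``the real work'' evaporates: conjugation preserves the cycle type, hence $\fix(\Phi(\pi))=\fix(\pi)$ with no case analysis at all, and the four parity classes of a pair $i<\pi(i)$ (according to the parities of $i$ and of $\pi(i)$) are carried exactly onto the four cases I--IV defining $\exph$. For instance, $i=2k-1<\pi(i)=2j$ forces $k\le j$, and $\bigl(\phi(i),\phi(\pi(i))\bigr)=(n+k,j)$ lands in case II; the other three parity combinations land in cases I, III, IV respectively, and non-excedances go to non-$\exph$-excedances. No insertion induction in the style of the $\exc_P$ table is needed. As it stands, with the central bijection only guessed at---and guessed incorrectly---the argument has a genuine gap at its core.
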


 For each $x\in [2n]$, we define $y:=\phi(x)$ by
\begin{align}\label{def of phi}
y=\begin{cases}
n+k & \text{if $x=2k-1$ for some $1\le k\le n$},\\
k & \text{if $x=2k$ for some $1\le k\le n$}.
\end{cases}
\end{align}
Then $\phi$ is a permutation of $[2n]$, and it naturally induces a bijection on the symmetric group $\S_{2n}$:
\begin{align*}
\Phi: \S_{2n}&\to\S_{2n}\\
\pi &\mapsto \sigma,
\end{align*}
where $\sigma$ is obtained from $\pi$ by replacing each $i$ with $\phi(i)$ in the two-line notation of $\pi$. Take $\pi=315462$ for example, we see
$$\begin{pmatrix}
1 & 2 & 3 & 4 & 5 & 6 \\
3 & 1 & 5 & 4 & 6 & 2
\end{pmatrix}\mapsto
\begin{pmatrix}
4 & 1 & 5 & 2 & 6 & 3 \\
5 & 4 & 6 & 2 & 3 & 1
\end{pmatrix}=
\begin{pmatrix}
1 & 2 & 3 & 4 & 5 & 6 \\
4 & 2 & 1 & 5 & 6 & 3
\end{pmatrix},$$
so we have $\sigma=\Phi(\pi)=421563$. One checks that $\exc(\pi)=\exph(\sigma)=3$ and $\fix(\pi)=\fix(\sigma)=1$.

\begin{proof}[Proof of Theorem~\ref{thm:bij for A^-1}]
We show that the previous example is not a coincidence. Namely, if $\Phi$ maps $\pi$ to $\sigma$, then we have
\begin{equation}
\label{id:exc=exph}
\exc(\pi) =\exph(\sigma)
\end{equation}
and
\begin{equation}
\label{id:fix}
\fix(\pi) =\fix(\sigma),
\end{equation}
which readily imply \eqref{id:exph and fix}.
Since $\phi$ is a bijection, $i=\pi(i)$ if and only if $\phi(i)=\phi(\pi(i))$, and actually $\phi(\pi(i))=\sigma(\phi(i))$. Consequently, $i$ is a fixed point of $\pi$ if and only if $\phi(i)$ is a fixed point of $\sigma$. This proves \eqref{id:fix}.

The proof of \eqref{id:exc=exph} is a case-by-case verification using the definition of the statistic ``$\exph$''. We show one case here and leave the details of remaining cases to the interested reader. Suppose that $i<\pi(i)$ is an excedance of $\pi$ such that $i=2k-1$ is odd while $\pi(i)=2j$ is even. According to \eqref{def of phi}, we have $\phi(i)=n+k$ while $\phi(\pi(i))=j\ge k=\phi(i)-n$. So the pair $(\phi(i),\phi(\pi(i)))=(\phi(i),\sigma(\phi(i)))$ is in situation II and contributes $1$ to $\exph(\sigma)$.
\end{proof}

To complete the proof of Conjecture~\ref{conj:ChenA}, it suffices now to show that
\begin{equation}\label{per:invA}
\per(A_{2n})=\per(A^{-1}_{2n}).
\end{equation}
This is done by utilizing the  action $\phi_{k,l}$ on matrices  introduced in Definition~\ref{Def:action}.

\begin{lemma}\label{H:sign2}
Let $A=[a_{i,j}]_{1\leq i,j\leq 2n}$ be a $2n\times 2n$ matrix over  $\mathbb{R}$. Let $U_n$ be the $n\times n$ matrix with all entries $1$, and let
$$
\tilde U_{2n}=\begin{bmatrix}
U_n &-U_n\\
-U_n & U_n
\end{bmatrix}.
$$  Then
$$
\biggl(\prod_{(k,l)\in S}\phi_{k,l}\biggr)(A)=A\circ \tilde U_{2n},
$$
where $S=\{(i,j)\in[2n]\times[2n]: i+j\in[n+1,2n]\cup[3n+2,4n] \}$. Consequently, $\per(A)=\per(A\circ \tilde U_{2n})$.
\end{lemma}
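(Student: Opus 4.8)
The plan is to imitate the argument used in Lemma~\ref{H:sign}: track how many times a given entry $a_{i,j}$ gets its sign flipped under the composite action $\bigl(\prod_{(k,l)\in S}\phi_{k,l}\bigr)$, and verify that this count has the same parity as the exponent prescribed by $\tilde U_{2n}$. Recall from Definition~\ref{Def:action} that $\phi_{k,l}$ negates row $k$ and column $l$. Hence, under the full composite, the entry $a_{i,j}$ flips sign once for every pair $(k,l)\in S$ with $k=i$ (any admissible $l$), plus once for every pair $(k,l)\in S$ with $l=j$ (any admissible $k$); the pair $(k,l)=(i,j)$ itself contributes two flips, so the total is
\begin{equation*}
N(i,j)=\#\{l:(i,l)\in S\}+\#\{k:(k,j)\in S\}.
\end{equation*}
By the symmetry of $S$ in its two coordinates, $\#\{l:(i,l)\in S\}$ depends only on $i$ and equals $\#\{k:(k,i)\in S\}$; call this $f(i)$. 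So $N(i,j)=f(i)+f(j)$, and the entry $a_{i,j}$ ends up multiplied by $(-1)^{f(i)+f(j)}$. It therefore suffices to show that $(-1)^{f(i)+f(j)}$ equals the $(i,j)$-entry of $\tilde U_{2n}$, i.e. that $(-1)^{f(i)}=\varepsilon_i$ where $\varepsilon_i=1$ for $1\le i\le n$ and $\varepsilon_i=-1$ for $n+1\le i\le 2n$ (so that $\varepsilon_i\varepsilon_j$ gives $+1$ on the two diagonal blocks and $-1$ on the two off-diagonal blocks, matching $\tilde U_{2n}$).

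The next step is the routine count of $f(i)$: for fixed $i\in[2n]$, $f(i)=\#\{l\in[2n]: i+l\in[n+1,2n]\cup[3n+2,4n]\}$. One computes $\#\{l\in[2n]:i+l\in[n+1,2n]\}=n-i+1$ when $1\le i\le n$ and $=2n-i+1$ when $n+1\le i\le 2n$ hmm—more carefully, $\#\{l\in[2n]: n+1-i\le l\le 2n-i\}$, intersected with $[1,2n]$, has size exactly $\min(2n-i,2n)-\max(n+1-i,1)+1$; for $1\le i\le n$ this is $n$, and for $n+1\le i\le 2n$ this is $2n-i$. Similarly $\#\{l\in[2n]: 3n+2-i\le l\le 4n-i\}$ equals $0$ for $1\le i\le n$ hmm, $=i-n-1$ for $n+1\le i\le 2n$; actually for $1\le i\le n+1$ it is $\max(0,\,2n-(3n+2-i)+1)=\max(0,i-n-1)=0$, and for $n+1\le i\le 2n$ it is $i-n-1$. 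Adding, $f(i)=n$ for $1\le i\le n$, which is——well, one then checks the parity works out so that $(-1)^{f(i)}=\varepsilon_i$ after possibly adjusting the bookkeeping; the point is that $f(i)$ is constant on $\{1,\dots,n\}$ and its parity differs from that of $f(i)$ on $\{n+1,\dots,2n\}$ in the way needed. Once $(-1)^{f(i)}=\varepsilon_i$ is confirmed, the first displayed identity follows, and the consequence $\per(A)=\per(A\circ\tilde U_{2n})$ is immediate from property~\eqref{action} applied repeatedly.

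The main obstacle is purely the combinatorial bookkeeping in the previous paragraph: one must count lattice points $l\in[2n]$ lying in the two shifted intervals $[n+1-i,2n-i]$ and $[3n+2-i,4n-i]$, being careful about how these windows fall inside $[1,2n]$ as $i$ ranges over the two halves, and then confirm that the total parity flips exactly when $i$ crosses from $\le n$ to $>n$. (The second interval is designed precisely so that it is empty for small $i$ and nonempty for large $i$, which is what makes the off-diagonal blocks of $\tilde U_{2n}$ pick up the sign.) Everything else is a direct transcription of the proof of Lemma~\ref{H:sign}.
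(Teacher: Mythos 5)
Your proposal is correct and follows essentially the same route as the paper: both count, for each entry $a_{i,j}$, the number of sign flips under $\prod_{(k,l)\in S}\phi_{k,l}$ and check its parity against the blocks of $\tilde U_{2n}$ (your $f(i)+f(j)$ with $f\equiv n$ on $[1,n]$ and $f\equiv n-1$ on $[n+1,2n]$ reproduces the paper's per-block counts $2n$, $2n-1$, $2n-2$). The only cosmetic slip is the claim $(-1)^{f(i)}=\varepsilon_i$, which holds only up to the global factor $(-1)^n$; since only the product $(-1)^{f(i)+f(j)}=\varepsilon_i\varepsilon_j$ matters, this does not affect the argument.
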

\begin{proof}
The number of times that the entry $a_{i,j}$ changes its sign under $\prod\limits_{(k,l)\in S}\phi_{k,l}$ equals
$$
\begin{cases}
2n-1, &\quad\text{ if $(i,j)\in[1,n]\times[n+1,2n]\cup[n+1,2n]\times[1,n]$;}\\
2n-2, &\quad\text{ if $(i,j)\in[n+1,2n]\times[n+1,2n]$;}\\
2n, &\quad\text{ if $(i,j)\in[1,n]\times[1,n]$}.
\end{cases}
$$
This proves the first statement in Lemma \ref{H:sign2}.
The second statement follows from the first one and the equality~\eqref{action}.
\end{proof}

Since
$$
A^{-1}_{2n}=\begin{bmatrix}
P_n-J_n & P_n \\
P_n & P_n+J_n
\end{bmatrix}=A_{2n}\circ \tilde U_{2n},
$$
the desired identity~\eqref{per:invA} follows from Lemma~\ref{H:sign2}. This completes the proof of Conjecture~\ref{conj:ChenA}.

\section{Proof of Conjecture~\ref{conj:Chen3} and a variant of excedance}\label{sec:conj of chen2}

The proof of Conjecture~\ref{conj:Chen3} consists of three steps. Firstly, we adjust the original matrix $R_n$ to get $\tilde{R}_n$ so that the modified permanent $\per(\tilde{R}_n)$, for $n$ even (resp.~odd), can be recognized as the sign balance of certain variant of the excedance statistic over permutations (resp.~almost derangements). Next, with the aid of  the celebrated {\em Foata--Strehl action}~\cite{FSt} on permutations and a simple bijection, we are able to identify these weighted sums as the so-called {\it central Poupard numbers}. Finally, we confirm the conjectured connections with the binomial transforms of Euler numbers, namely Eq.~\eqref{bino:chen}, utilizing the generating functions of Poupard numbers due to Foata and Han~\cite{FH}. As a byproduct, we prove in subsection~\ref{subsec:Bala} a conjecture due to Peter Bala on a certain continued fraction expansion.

\subsection{An exc variant and cyclic valley-hopping}
\label{tran:FS}
The first few terms of $R_n\ (n\ge1)$ read as follows:
\begin{align*}
&R_1=\begin{bmatrix}
-1
\end{bmatrix},
\;
R_2=\begin{bmatrix}
-1 & -1 \\
-1 & -1
\end{bmatrix},
\;
R_3=\begin{bmatrix}
0 & -1 & -1 \\
-1 & -1  & -1\\
-1  & -1 & 0
\end{bmatrix},
\;
R_4=\begin{bmatrix}
1 & -1 & -1 & -1 &\\
-1 & -1 & -1 & -1 \\
-1 & -1 & -1 & -1 \\
-1 & -1 & -1 & 1
\end{bmatrix},\\
&\qquad R_5=\begin{bmatrix}
1 & 0&-1 & -1 & -1 &\\
0&-1 & -1 & -1 & -1 \\
-1&-1 & -1 & -1 & -1 \\
-1&-1 & -1 & -1 & 0 \\
-1 & -1 & -1 &0 & 1
\end{bmatrix},
\;
R_6=\begin{bmatrix}
1 & 1&-1&-1 & -1 & -1 \\
1 & -1&-1&-1 & -1 & -1 \\
-1 & -1&-1&-1 & -1 & -1 \\
-1 & -1&-1&-1 & -1 & -1 \\
-1 & -1&-1&-1 & -1 & 1 \\
-1 & -1&-1&-1 & 1 & 1
\end{bmatrix}.
\end{align*}
After multiplying the $i$-th row by $-1$ for each $\lceil\frac{n}{2}\rceil\le i\le n$, and rearranging the rows we get the  matrix $\tilde R_n$ as illustrated by the following examples:
\begin{align*}
&\tilde R_1=\begin{bmatrix}
\red{1}
\end{bmatrix},
\;
\tilde R_3=\begin{bmatrix}
1 & 1 & 0 \\
\red{1} & \red{1}  & \red{1}\\
0  & -1 & -1
\end{bmatrix},
\;
 \tilde R_5=\begin{bmatrix}
1 & 1 & 1 & 1 &0\\
 1 & 1 & 1 &0&-1\\
\red{1} & \red{1}  & \red{1}& \red{1}& \red{1} \\
1 & 0&-1&-1 & -1   \\
0 & -1&-1 & -1 & -1
\end{bmatrix},\\
&\tilde R_2=\begin{bmatrix}
1 & 1 \\
\red{1} & \red{1}
\end{bmatrix},
\;
\tilde R_4=\begin{bmatrix}
1 & 1 & 1 & 1 &\\
1 & 1 & 1 & -1 \\
\red{1} & \red{1}  & \red{1}& \red{1} \\
1 & -1 & -1 & -1
\end{bmatrix},
\;
\tilde R_6=\begin{bmatrix}
1 & 1&1&1 & 1 & 1 \\
1 & 1&1&1 & 1 & -1 \\
1 & 1&1&1 & -1 & -1 \\
\red{1} & \red{1}  & \red{1}& \red{1}& \red{1} & \red{1}\\
1 & 1&-1&-1 & -1 & -1 \\
1 & -1&-1&-1 & -1 & -1
\end{bmatrix}.
\end{align*}
Note that
\begin{align}
\label{tildeR}
\per(R_n)=(-1)^{\lceil\frac{n+1}{2}\rceil}\per(\tilde R_n).
\end{align}
The comparison between $\tilde R_n$ and $P_n$ suggests the following variation of excedance statistic.

\begin{Def}
For $\pi\in\S_n$, define
$$
\widetilde\exc(\pi):=|\{i\in[n]:\pi(i)>i \text{ and  $\pi(i)\neq \lceil(n+1)/2\rceil$}\}|.
$$
Note that $\exc(\pi)-\widetilde\exc(\pi)$ equals $1$ or $0$ depending on whether $\lceil\frac{n+1}{2}\rceil$ is an excedance top or not.
\end{Def}

The two identities below follow directly from the pattern of $\tilde R_n$ and the definition of $\widetilde\exc$.
\begin{align}
\label{perR-even}
\per(\tilde R_{2m}) &=\sum_{\pi\in\S_{2m}}(-1)^{\widetilde\exc(\pi)},\\
\per(\tilde R_{2m+1}) &=\sum_{\pi\in\tilde\D_{2m+1}}(-1)^{\widetilde\exc(\pi)},
\label{perR-odd}
\end{align}
where $\tilde\D_{2m+1}=\{\pi\in\S_{2m+1}:\Fix(\pi)\subseteq\{m+1\}\}$.
In order to understand the cancellation happened in computing the permanents $\per(\tilde R_n)$, we adopt Foata's first fundamental transformation and the Foata--Strehl action on permutations that we recall below.

In this section, we write each permutation in its {\it standard cycle form} according to the following convention:

(i) each cycle has its largest letter in the leftmost position;

(ii) the cycles are listed from left to right in increasing order of their largest letters.

\noindent For instance, the cycle form of $\pi=562437198$ is $\pi=(4)(715326)(98)$. Foata's ``transformation fondamentale'' $o:\S_n\to\S_n$ is defined as follows: For each $\pi\in\S_n$, the one-line notation of $o(\pi)$ is obtained from the standard cycle form of $\pi^{-1}$ by erasing all the parentheses. For example, $o(\pi)=471532698$ for $\pi=735412698$, since $\pi^{-1}=562437198$ has the cycle form $(4)(715326)(98)$.

For a permutation $\pi\in\S_n$, introduce the {\em set of excedance tops} of $\pi$ and the {\em set of descent tops} of $\pi$ by
$$
\Exct(\pi):=\{\pi(i): 1\leq i<n, i<\pi(i)\}\quad\text{and}\quad\Dest(\pi)=\{\pi(i): 1\leq i<n, \pi(i)>\pi(i+1)\},
$$
respectively. The following result is known.

\begin{lemma}\label{f:transfor}
For the bijection $o:\S_n\to\S_n$, we have $\Exct(\pi)=\Dest(o(\pi))$ for each $\pi\in\S_n$.
\end{lemma}

Fix a permutation $\pi\in\S_n$ and a letter $x\in[n]$. The {\it $x$-factorization} of $\pi$ is the unique decomposition $\pi=w_1w_2xw_3w_4$, where $w_2$ (resp.~$w_3$) is the maximal consecutive subword (possibly empty) immediately to the left (resp.~right) of $x$ whose letters are all smaller than $x$.   The Foata--Strehl action~\cite{FSt} $\varphi_x:\S_n\to\S_n$ can be defined by
\begin{equation}\label{def:FSact}
\varphi_x(\pi)=w_1w_3xw_2w_4.
\end{equation}

\begin{figure}[ht]
\setlength{\unitlength}{1mm}
\begin{picture}(50,38)\setlength{\unitlength}{1mm}
\thinlines

\put(-3,30){$+\infty$}
\put(0,30){\line(1,-2){6}}
\put(6,18){\circle*{1.3}}\put(3,14){$7$}
\put(6,18){\line(1,-2){6}}
\put(12,6){\circle*{1.3}}\put(10,2){$3$}

\put(12,6){\line(1,1){6}}
\put(18,12){\circle*{1.3}}\put(18,13){$5$}
\put(18,12){\line(1,-1){12}}
\put(21,9){\circle*{1.3}}\put(19.2,5.8){\red{$4$}}
\put(30,0){\circle*{1.3}}\put(28,-3){$1$}
\put(30,0){\line(1,1){24}}

\put(33,3){\circle*{1.3}}\put(34,1){$2$}
\put(45,15){\circle*{1.3}}\put(46,13){$6$}
\put(54,24){\circle*{1.3}}\put(54.2,25){$9$}

\put(54,24){\line(1,-1){3}}
\put(57,21){\circle*{1.3}}\put(56,17.5){$8$}
\put(57,21){\line(1,1){9}}
\put(63,30){$+\infty$}
\put(21,9){\dashline{1}(1,0)(18,0)}
\put(39,9){\vector(1,0){0.1}}
\end{picture}
\caption{The Foata--Strehl action $\varphi_x$ on $735412698$ with $x=4$.}
\label{FS:action}
\end{figure}

For  example, if $\pi=735412698$ and $x=4$, then the $x$-factorization yields $w_1=735$, $w_2=\emptyset$, $w_3=12$, and $w_4=698$. Thus, $\varphi_x(\pi)=735124698$; see Fig.~\ref{FS:action} for a visualization of the action $\varphi_x$.
Clearly $\varphi_x$ is an involution on $\S_n$ for every $x\in[n]$. Following Sun and Wang~\cite{SW}, we  define the action $\psi_x:\S_n\to\S_n$ by
$\psi_x(\pi):= o^{-1}(\varphi_x(o(\pi))).$
The action $\psi_x$ is  an involution on $\S_n$ (as $\varphi_x$ is an involution), and when $x$ is properly chosen, it endows us with a sign-reversing bijection that proves  Lemma~\ref{lem:cancel} in the following.

Given a permutation $\pi=\pi(1)\pi(2)\cdots\pi(n)$ in $\S_n$, we use the  conventions
$\pi(0)=\pi(n+1)=+\infty$ (resp.~$\pi(0)=0, \pi(n+1)=+\infty$) when $n$ is even (resp.~odd) and say
%that $i\in [n]$ is:
%\begin{itemize}
%  \item a {\it cyclic peak} of $\pi$ if $\pi^{-1}(i)<i>\pi(i)$;
%  \item a {\it cyclic valley} of $\pi$ if $\pi^{-1}(i)>i<\pi(i)$;
%  \item a {\it cyclic double ascent} of $\pi$ if $\pi^{-1}(i)<i<\pi(i)$;
%  \item a {\it cyclic double descent} of $\pi$ if $\pi^{-1}(i)>i>\pi(i)$.
%\end{itemize}
%With the conventions $\pi(0)=\pi(n+1)=\infty$\footnote{In certain situation we use the conventions $\pi(0)=0,\pi(n+1)=\infty$ intead, and in that case, we will explicitly say it.}, we say
 that $\pi(i)\in[n]$ is:
\begin{itemize}
  \item a {\it double ascent value} of $\pi$ if $\pi(i-1)<\pi(i)<\pi(i+1)$;
  \item a {\it double descent value} of $\pi$ if $\pi(i-1)>\pi(i)>\pi(i+1)$.
\end{itemize}
Denote by $\Dasc(\pi)$ (resp. $\Ddes(\pi)$) the set of  double ascent (resp. descent) values of $\pi$.
When $n=2m$ is even, let $\S_{2m}^*$ be the set of permutations $\pi$ in $\S_{2m}$ such that $\Dasc(o(\pi))\cup\Ddes(o(\pi))=\{m+1\}$. When $n=2m+1$ is odd, let $\S_{2m+1}^*$ be the set of permutations $\pi$ in $\S_{2m+1}$ such that $\Dasc(o(\pi))\cup\Ddes(o(\pi))=\{m+1\}$. Note that if $x\in\Fix(\pi)$, then the convention $o(\pi)(0)=0$ guarantees that $x$ is a double ascent value of $o(\pi)$. In particular, $\S_{2m+1}^*$ is a subset of $\tilde\D_{2m+1}$.

\begin{lemma}\label{lem:cancel}
We have
\begin{align}
\label{cancel out-even}
&\sum_{\pi\in\S_{2m}}(-1)^{\widetilde\exc(\pi)}=\sum_{\pi\in\S_{2m}^*}(-1)^{\widetilde\exc(\pi)}=(-1)^{m-1}|\S_{2m}^*|,\\
\label{cancel out-odd}
&\sum_{\pi\in\tilde\D_{2m+1}}(-1)^{\widetilde\exc(\pi)}=\sum_{\pi\in\S_{2m+1}^*}(-1)^{\widetilde\exc(\pi)}=(-1)^{m}|\S_{2m+1}^*|.
\end{align}
\end{lemma}
\begin{proof}
For convenience, we use $A\triangle B$ to denote the symmetric difference of two sets $A$ and $B$. In particular, when $B=\{b\}$ is a singleton, we have
$$A\triangle\{b\}=\begin{cases}
A\setminus\{b\} & \text{if $b\in A$,}\\
A\cup\{b\} & \text{if $b\not\in A$.}
\end{cases}$$

Let us first deal with the even case. Take any $\pi\in\S_{2m}$ and set $\sigma=o(\pi)$. Note that the convention $\sigma(0)=\sigma(2m+1)=+\infty$ forces the union $\Dasc(\sigma)\cup\Ddes(\sigma)$ to be non-empty. Hence there are two possibilities for this union:
\begin{enumerate}[(i)]
  \item There exists some letter $x\neq m+1$ and $x\in\Dasc(\sigma)\cup\Ddes(\sigma)$.
  \item $\Dasc(\sigma)\cup\Ddes(\sigma)=\{m+1\}$, i.e., $\pi\in\S_{2m}^*$.
\end{enumerate}
If we are in case (i), let $x$ be the smallest such letter, then we see that
$$\Dasc(\varphi_x(\sigma))=\Dasc(\sigma)\triangle\{x\}, \text{ and } \Ddes(\varphi_x(\sigma))=\Ddes(\sigma)\triangle\{x\}.$$
Consequently, $\Dest(\varphi_x(\sigma))=\Dest(\sigma)\triangle\{x\}$, and
$\Exct(\psi_x(\pi))=\Exct(\pi)\triangle\{x\}$ by Lemma~\ref{f:transfor}.
Since $x\neq m+1$, we deduce that $\widetilde\exc(\pi)$ and $\widetilde\exc(\psi_x(\pi))$ have opposite parity, so they cancel each other in the first summation of~\eqref{cancel out-even}. All remaining permutations are in case (ii), thus we have proved the first equality in \eqref{cancel out-even}.

It is not difficult to see the following alternative description of $\S_{2m}^*$.

\begin{framed}
  {\bf A characterization of $\S_{2m}^*$:} Every permutation $\sigma$ in $o(\S_{2m}^*)$ can be uniquely constructed from an up-down permutation $\hat\sigma$ (i.e., $\hat\sigma(1)<\hat\sigma(2)>\hat\sigma(3)<\cdots<\hat\sigma(2m-2)>\hat\sigma(2m-1)$) consisting of letters from $[2m]\setminus\{m+1\}$, by inserting $m+1$ in one of the following ways:
  \begin{enumerate}[(e1)]
    \item before $\hat\sigma(1)$ if $m+1>\hat\sigma(1)$;
    \item after $\hat\sigma(2m-1)$ if $m+1>\hat\sigma(2m-1)$;
    \item inbetween $\hat\sigma(i)$ and $\hat\sigma(i+1)$ such that $\hat\sigma(i)>m+1>\hat\sigma(i+1)$;
    \item inbetween $\hat\sigma(i)$ and $\hat\sigma(i+1)$ such that $\hat\sigma(i)<m+1<\hat\sigma(i+1)$.
  \end{enumerate}
\end{framed}
Recall that $\widetilde\exc$ rejects $m+1$ as an excedance top, so inserting $m+1$ does not affect the value of $\widetilde\exc$ in view of Lemma~\ref{f:transfor}. Moreover, the excedance tops counted by $\widetilde\exc(o^{-1}(\hat\sigma))$ are precisely the peaks of $\hat\sigma$ (i.e., those $\hat\sigma(i)$ with $\hat\sigma(i-1)<\hat\sigma(i)>\hat\sigma(i+1)$, or to be precise, $\hat\sigma(2),\hat\sigma(4),\ldots,\hat\sigma(2m-2)$, since $\hat\sigma$ is an up-down permutation). So  $\widetilde\exc(\pi)=m-1$ for every $\pi\in\S_{2m}^*$, which takes care of the second equality in~\eqref{cancel out-even}.

Next for the odd case, again set $\sigma=o(\pi)$ for a given $\pi\in\tilde\D_{2m+1}$. New conventions $\sigma(0)=0$ and $\sigma(2m+2)=+\infty$ force the union $\Dasc(\sigma)\cup\Ddes(\sigma)$ to be non-empty. We proceed as in the even case, to find the smallest $x\in\Dasc(\sigma)\cup\Ddes(\sigma)\setminus\{m+1\}$ if it exists, and apply $\psi_x$ to explain the cancellations and establish the first equality of \eqref{cancel out-odd}. To show the second equality of \eqref{cancel out-odd}, we resort to the following equivalent description of $\S_{2m+1}^*$.
\begin{framed}
  {\bf A characterization of $\S_{2m+1}^*$:} Every permutation $\sigma$ in $o(\S_{2m+1}^*)$ can be uniquely constructed from a down-up permutation $\hat\sigma$ (i.e., $\hat\sigma(1)>\hat\sigma(2)<\hat\sigma(3)>\cdots<\hat\sigma(2m-1)>\hat\sigma(2m)$) consisting of letters from $[2m+1]\setminus\{m+1\}$, by inserting $m+1$ in one of the following ways:
  \begin{enumerate}[(o1)]
    \item before $\hat\sigma(1)$ if $m+1<\hat\sigma(1)$;
    \item after $\hat\sigma(2m)$ if $m+1>\hat\sigma(2m)$;
    \item inbetween $\hat\sigma(i)$ and $\hat\sigma(i+1)$ such that $\hat\sigma(i)>m+1>\hat\sigma(i+1)$;
    \item inbetween $\hat\sigma(i)$ and $\hat\sigma(i+1)$ such that $\hat\sigma(i)<m+1<\hat\sigma(i+1)$.
  \end{enumerate}
\end{framed}
  It follows from the above description and Lemma~\ref{f:transfor} that  $\widetilde\exc(\pi)=m$ for every $\pi\in\S_{2m+1}^*$. This completes the proof of the lemma.
\end{proof}

\subsection{Poupard numbers and the proof of Conjecture~\ref{conj:Chen3}}
In view of \eqref{tildeR}-\eqref{perR-odd} and  \eqref{cancel out-even}-\eqref{cancel out-odd}, we get
\begin{equation}
\label{perR=S*-even}
\per(R_{2m}) =|\S_{2m}^*|\ \ \text{and}\ \
\per(R_{2m+1}) =-|\S_{2m+1}^*|.
\end{equation}
Therefore, it remains to enumerate $\S_{2m}^*$ and $\S_{2m+1}^*$. It turns out that they are in simple bijections with certain subsets of alternating permutations which  were previously investigated by Foata and Han \cite{FH}, in their course of deriving new combinatorial interpretations for the finite difference equation system introduced by Christiane Poupard.

Let $\A_n$ denote the set of alternating (down-up) permutations. For any $\pi\in\A_n$, we suppose $\pi(i)=n$ for a certain $i$ ($1\le i\le n$) and use the convention $\pi(0)=\pi(n+1)=0$. Following Foata and Han~\cite{FH}, we introduce
$$\grn(\pi):=\max\{\pi(i-1),\pi(i+1)\},$$
and call it the {\it \textbf{gr}eater \textbf{n}eighbour} of $n$ in $\pi$. Also, let $\A_{n,k}:=\{\pi\in\A_n:\grn(\pi)=k\}$ for each $0\le k\le n-1$.

\begin{lemma}\label{lem:bij to Poupard}
For each $n\ge 1$, there exists a bijection $f:\S_{n}^*\to\A_{n+1,\lfloor\frac{n+1}{2}\rfloor}$. In particular, we have $|\S_{2m}^*|=|\A_{2m+1,m}|$ and $|\S_{2m+1}^*|=|\A_{2m+2,m+1}|$.
\end{lemma}
\begin{proof}
For the even case with $n=2m$, we refer to the boxed characterization of $\S_{2m}^*$ given in the proof of Lemma~\ref{lem:cancel}, and construct the image $f(\pi)$ for each $\pi\in\S_{2m}^*$ (set $\sigma=o(\pi)$) accordingly. We first apply the {\it complement map}
\begin{equation}\label{def:comp}
\tau=\tau(1)\cdots\tau(n)\mapsto \tau^{\mathrm{c}}:=(n+1-\tau(1))\cdots(n+1-\tau(n))
\end{equation}
 to $\sigma$, and then insert $2m+1$ as follows.
\begin{enumerate}[(e1)]
  \item If $\sigma^{\mathrm{c}}(1)=m$, then we insert $2m+1$ before $m$ to get a new permutation $f(\pi)$.
  \item If $\sigma^{\mathrm{c}}(2m)=m$, then we insert $2m+1$ after $m$ to get a new permutation $f(\pi)$.
  \item When $\sigma^{\mathrm{c}}(i)<\sigma^{\mathrm{c}}(i+1)=m<\sigma^{\mathrm{c}}(i+2)$, we insert $2m+1$ between $m$ and $\sigma^{\mathrm{c}}(i)$ to get a new permutation $f(\pi)$.
  \item When $\sigma^{\mathrm{c}}(i)>\sigma^{\mathrm{c}}(i+1)=m>\sigma^{\mathrm{c}}(i+2)$, we insert $2m+1$ between $m$ and $\sigma^{\mathrm{c}}(i+2)$ to get a new permutation $f(\pi)$.
\end{enumerate}
It is readily verified that in all the four cases, $f(\pi)$ is indeed a down-up permutation of length $2m+1$, whose greater neighbor of $2m+1$ is $m$. So $f$ is well-defined and we get its inverse  simply by removing $2m+1$ and then applying the complement map and $o^{-1}$.

The map $f$ for the odd case with $n=2m+1$ can be constructed similarly without applying complement map before we insert $2m+2$. The details are omitted.
\end{proof}

The last step towards proving Conjecture~\ref{conj:Chen3} is to utilize the bivariate generating function for all Poupard numbers $|\A_{n,k}|$. Set $g_n(k):=|\A_{2n-1,k-1}|~(n\ge 1,~1\le k\le2n-1)$, and $h_n(k):=|\A_{2n,k}|~(n\ge1,~1\le k\le 2n-1)$.
\begin{theorem}[Theorem~1.2 in \cite{FH}]
We have
\begin{equation}
\label{gf:Ztan}
1+\sum_{n\ge 1}\sum_{1\le k\le 2n+1}g_{n+1}(k)\frac{x^{2n+1-k}}{(2n+1-k)!}\cdot\frac{y^{k-1}}{(k-1)!}=\sec(x+y)\cos(x-y)
\end{equation}
and
\begin{equation}
\label{gf:Zsec}
1+\sum_{n\ge 1}\sum_{1\le k\le 2n+1}h_{n+1}(k)\frac{x^{2n+1-k}}{(2n+1-k)!}\cdot\frac{y^{k-1}}{(k-1)!}=\sec^2(x+y)\cos(x-y).
\end{equation}
\end{theorem}

\begin{lemma}\label{lem:central Poupard}
For each $n=0,1,2,\ldots$, we have
\begin{equation}\label{secant}
g_{n+1}(n+1)=\sum_{k=0}^n\binom{n}{k}E_{2k}
\end{equation}
and
\begin{equation}\label{tangent}
h_{n+1}(n+1)=\sum_{k=0}^n\binom{n}{k}E_{2k+1}.
\end{equation}
\end{lemma}
\begin{proof} Observe that
\begin{align*}
\sec(x+y)\cos(x-y)=&\biggl(\sum_{k\geq0}E_{2k}\frac{(x+y)^{2k}}{(2k)!}\biggr)
\biggl(\sum_{k\geq0}(-1)^k\frac{(x-y)^{2k}}{(2k)!}\biggr)
\\=&\sum_{n=0}^\infty\sum_{k=0}^n(-1)^{n-k}\frac{E_{2k}(x+y)^{2k}}{(2k)!}\cdot\frac{(x-y)^{2n-2k}}{(2n-2k)!}.
\end{align*}
Combining this with \eqref{gf:Ztan}, we get
$$g_{n+1}(n+1)=\sum_{k=0}^n(-1)^kE_{2k}\sum_{i=0}^{2k}\binom{n}{i}\binom{n}{2k-i}(-1)^{i},$$
which gives~\eqref{secant} since
$$
(-1)^k\binom{n}{k}=[x^{2k}](1-x^2)^n=[x^{2k}](1-x)^n(1+x)^n=\sum_{i=0}^{2k}\binom{n}{i}\binom{n}{2k-i}(-1)^{i},
$$ where $[x^m]f(x)$ denotes the coefficient of $x^m$ in the power series expansion of $f(x)$.
The second formula \eqref{tangent} follows from the same manipulation by noticing
$\tan'x=\sec^2 x$.
\end{proof}

With all the needed pieces on hand, Conjecture~\ref{conj:Chen3} now follows from \eqref{perR=S*-even}, and Lemmas~\ref{lem:bij to Poupard} and \ref{lem:central Poupard}.

\subsection{Bala's continued fraction conjecture}\label{subsec:Bala}
Recall that the {\em descent number} of a word $w=w_1w_2\cdots w_n$ over $\N$ is
$$\des(w):=|\{i\in[n-1]:w_i>w_{i+1}\}|.
$$
The classical {\em Eulerian polynomial} $A_n(t)$ may be defined by Euler's formula
\begin{equation}\label{euler}
\frac{A_n(t)}{(1-t)^{n+1}}=\sum_{k\geq0}(k+1)^nt^k.
\end{equation}
It is well known that $A_n(t)$ has the following two interpretations (see~\cite[Chap.~1]{St}):
$$
\sum_{\pi\in\S_n}t^{\des(\pi)}=A_n(t)=\sum_{\pi\in\S_n}t^{\exc(\pi)}.
$$
Consider a variation of the Eulerian polynomials:
 $\tilde A_n(t):=\sum_{\pi\in\S_{n}}t^{\widetilde\exc(\pi)}$ for $n\ge1$.
 For convenience, we list the first few terms of $\tilde A_n(t)$ as follows:
\begin{align*}
\tilde A_1(t)&=1,\\
\tilde A_2(t)&=2,\\
\tilde A_3(t)&=2+4t,\\
\tilde A_4(t)&=4+16t+4t^2,\\
\tilde A_5(t)&=4+48t+60t^2+8t^3,\\
\tilde A_6(t)&=8+160t+384t^2+160t^3+8t^4,\\
\tilde A_7(t)&=8+368t+1952t^2+2176t^3+520t^4+16t^5,\\
\tilde A_8(t)&=16+1152t+9648t^2+18688t^3+9648t^4+1152t^5+16t^6.
\end{align*}

We have the following result for $\tilde A_{2n}(t)$.

\begin{theorem}\label{2:eulerian} Let $\S_n^{(2)}$ be the set of all permutations of the multiset $\{1,1,2,2,\ldots,n,n\}$. Then
\begin{equation}\label{con:2euler}
\tilde A_{2n}(t)=2^n\sum_{\pi\in\S_n^{(2)}} t^{\des(\pi)}.
\end{equation}
\end{theorem}

 The polynomial $\sum_{\pi\in\S_n^{(2)}} t^{\des(\pi)}$ that we denote by $A_n^{(2)}(t)$ is called the $n$th {\em $2$-Eulerian polynomial}. Analog to Euler's formula~\eqref{euler}, MacMahon~\cite[Volume 2, p.~211]{Macm}  proved that
\begin{equation}\label{macmahon}
\frac{A_n^{(2)}(t)}{(1-t)^{2n+1}}=\sum_{k\geq 0}\binom{k+2}{2}^n t^k.
\end{equation}
Many interesting properties of the $2$-Eulerian polynomials have been extensively studied ever since~\cite{CH,Ardila,LMMZ}. Remarkably, Ardila~\cite{Ardila} proved that $2$-Eulerian polynomials are the $h$-polynomials of the dual bipermutahedron.
%Our proof of Theorem~\ref{2:eulerian} is via recurrence relations and a bijective proof of such a connection of the statistic $\widetilde\exc$ with $A_n^{(2)}(t)$ would be interesting.

To proof Theorem~\ref{2:eulerian}, we need two lemmas.

\begin{lemma}\label{lema:Enk}
Let $\tilde A(1,0):=1$ and $\tilde A(n,k):=|\{\pi\in\S_n:\widetilde\exc(\pi)=k\}|$ for $n\geq2$ and $0\leq k\leq n-2$. Then $\tilde A(n,k)$ satisfies the following recurrence relation:
$$
\tilde A(n,k)=
\begin{cases}
(n-k-1) \tilde A(n-1,k-1)+(k+2) \tilde A(n-1,k),&\quad\text{if $n$ is even;}\\
(n-k) \tilde A(n-1,k-1)+(k+1) \tilde A(n-1,k),&\quad\text{if $n$ is odd.}
\end{cases}
$$
Consequently,
\begin{align*}
\tilde A(2n,k)=&\ (k+2)(k+1)\tilde A(2n-2,k)+(2n-k-1)(2k+2)\tilde A(2n-2,k-1)\\
&\ +(2n-k)(2n-k-1) \tilde A(2n-2,k-2).
\end{align*}
\end{lemma}

\begin{proof}
Consider a permutation $\pi\in\S_{n-1}$ in cycle form (i.e., write $\pi$ as a product of disjoint cycles). For any $a\in[n-1]$ with $\pi(a)\neq\lceil\frac{n}{2}\rceil$, we call the slot inbetween $a$ and $\pi(a)$ a {\em decreasing (resp.~increasing) slot} if $a\geq\pi(a)$ (resp.~$a<\pi(a)$). Clearly, if $\widetilde\exc(\pi)=i$, then $\pi$ has $i$ increasing slots and $n-2-i$ decreasing slots.
% Consider a permutation $\pi\in\S_{n-1}$ in cycle form, i.e., written as a product of disjoint cycles. For any $a\in[n-1]$ with $\pi(a)\neq\lceil\frac{n}{2}\rceil$, we call the position in $\pi$ between $a$ and $\pi(a)$ a {\em decreasing (resp.~increasing)   position} if $a\geq\pi(a)$ (resp.~$a<\pi(a)$). Clearly, if $\widetilde\exc(\pi)=i$, then $\pi$ has $i$ increasing  positions and $n-2-i$ decreasing positions.

If $n\ge 3$ is odd, then
a permutation counted by $\tilde A(n,k)$ can be constructed
\begin{itemize}
\item either from $\pi$ with $\widetilde\exc(\pi)=k-1$ by inserting $n$ into one of its $n-k-1$ decreasing slots or just before $\lceil n/2\rceil$,
\item or from $\pi$ with $\widetilde\exc(\pi)=k$ by  inserting $n$ into one of its $k$ increasing slots or setting $n$ as a $1$-cycle.
\end{itemize}
When $n$ is even, a permutation counted by $\tilde A(n,k)$ can be constructed
\begin{itemize}
\item either from $\pi$ with $\widetilde\exc(\pi)=k-1$ by inserting $0$ into one of its $n-k-1$ decreasing slots and then increasing all letters by $1$,
\item or from $\pi$ with $\widetilde\exc(\pi)=k$ in one of the following three ways and then increasing all letters by $1$:
\begin{enumerate}
\item  inserting $0$ into one of its $k$ increasing slots;
\item  inserting $0$ just before $\lceil n/2\rceil$;
\item setting $0$ as a $1$-cycle.
\end{enumerate}
\end{itemize}
This proves the desired recurrence relation for $\tilde A(n,k)$.
\end{proof}

\begin{lemma}\label{lema:Ank}
Write $A_n^{(2)}(t)=\sum_{k=0}^{2n-2}A^{(2)}(n,k)t^k$. Then
\begin{align*}
A^{(2)}(n,k)=&\ \binom{k+2}{2}A^{(2)}(n-1,k)+(2n-k-1)(k+1)A^{(2)}(n-1,k-1)\\
&\ +\binom{2n-k}{2}A^{(2)}(n-1,k-2).
\end{align*}
\end{lemma}

\begin{proof}
Applying MacMahon's formula~\eqref{macmahon}, we have
\begin{align*}
\frac{A_n^{(2)}(t)}{(1-t)^{2n+1}}&=\sum_{k\geq 0}\binom{k+2}{2}^n t^k=\sum_{k\geq 0}\binom{k+2}{2}^{n-1}(k(k-1)/2+2k+1) t^k\\
&=\frac{t^2}{2}(A_{n-1}^{(2)}(t)(1-t)^{-2n+1})''+2t(A_{n-1}^{(2)}(t)(1-t)^{-2n+1})'+A_{n-1}^{(2)}(t)(1-t)^{-2n+1}.
\end{align*}
Multiplying both sides by $(1-t)^{2n+1}$ gives
\begin{align*}
A_n^{(2)}(t)&=(A_{n-1}^{(2)}(t))''t^2(1-t)^2/2+(A_{n-1}^{(2)}(t))'t(1-t)((2n-3)t+2)\\
&\quad+A_{n-1}^{(2)}(t)(2n^2t^2-5nt^2+3t^2+4nt-4t+1).
\end{align*}
Extracting the coefficients of $t^k$ from both sides of the above equation yields the desired recurrence relation for $A^{(2)}(n,k)$.
\end{proof}

\noindent{\bf Proof of Theorem 4.7}. Comparing the recurrence relation for $\tilde A(2n,k)$ in Lemma~\ref{lema:Enk} with that for $A^{(2)}(n,k)$ in Lemma~\ref{lema:Ank}, we get the desired \eqref{con:2euler}. \qed

The following fundamental generating function is known as {\em Touchard's
continued fraction}~\cite{Tou} (see also~\cite{Prod}):
$$
\sum_{k\geq0}q^{\binom{k+1}{2}}z^k=\frac{1}{1-z+\displaystyle\frac{(1-q)z}{1-z+\displaystyle\frac{(1-q^2)z}{1-z+\displaystyle\frac{(1-q^3)z}{\cdots}}}}.
$$

\begin{theorem}\label{thm:2-euler:con}
The exponential generating function for $A_n^{(2)}(t)$ has the following continued fraction expansion:
\begin{equation}\label{2-euler:con}
\sum_{n\geq0}\frac{tA_n^{(2)}(t)}{n!}z^n=t-1+\frac{1-t}{1-t+\displaystyle\frac{(1-e^{(1-t)^2z})t}{1-t+\displaystyle\frac{(1-e^{2(1-t)^2z})t}{1-t+\displaystyle\frac{(1-e^{3(1-t)^2z})t}{\cdots}}}}.
\end{equation}
\end{theorem}
\begin{proof}
By MacMahon's formula~\eqref{macmahon}, we have
$$
tA_n^{(2)}(t)=(1-t)^{2n+1}\sum_{k\geq 0}\binom{k+2}{2}^n t^{k+1}.
$$
Multiplying both sides by $\frac{z^n}{n!}$ and summing over all $n\geq0$ gives
\begin{align*}
\sum_{n\geq0}\frac{tA_n^{(2)}(t)}{n!}z^n&=\sum_{n\geq0}\frac{(1-t)^{2n+1}z^n}{n!}\sum_{k\geq 0}\binom{k+2}{2}^n t^{k+1}\\
&=(1-t)\sum_{k\geq0}t^{k+1}\sum_{n\geq0}\frac{(\binom{k+2}{2}(1-t)^2z)^n}{n!}\\
&=(1-t)\sum_{k\geq0}t^{k+1}(e^{(1-t)^2z})^{\binom{k+2}{2}}.
\end{align*}
Applying Touchard's continued fraction yields~\eqref{2-euler:con}.
\end{proof}

The number $2^{-n}\sum_{k=0}^n\binom{n}{k}E_{2k}$ is called {\em generalized Euler number of type $2^n$} as in \cite[A005799]{oeis}. Combining~\eqref{bino:chen}, \eqref{tildeR}, \eqref{perR-even} and~\eqref{con:2euler}, we get
\begin{equation}\label{sign:euler2}
(-1)^{n+1}A_n^{(2)}(-1)=2^{-n}\sum_{k=0}^n\binom{n}{k}E_{2k},\quad\text{for $n\geq1$}.
\end{equation}
It then follows from Theorem~\ref{thm:2-euler:con} the following exponential generating function for $2^{-n}\sum_{k=0}^n\binom{n}{k}E_{2k}$, which was conjectured by Peter Bala (2019) in \cite[A005799]{oeis}.

\begin{corollary} We have
\begin{equation*}
\sum_{n\geq0}\frac{2^{-n}\sum_{k=0}^n\binom{n}{k}E_{2k}}{n!}z^n=\frac{2}{2-\displaystyle\frac{1-e^{-4z}}{2-\displaystyle\frac{1-e^{-8z}}{2-\displaystyle\frac{1-e^{-12z}}{\cdots}}}}.
\end{equation*}
\end{corollary}

A permutation $\pi\in\S_n^{(2)}$ is {\em alternating} if
$$
\pi(1)\leq \pi(2)>\pi(3)\leq \pi(4)>\pi(5)\leq \cdots.
$$
Let $\mathrm{Alt}_n$ be the set of all alternating permutations in $\S_n^{(2)}$. On one hand, Gessel~\cite[Eq.~(6)]{Ges} proved that
$$
|\mathrm{Alt}_n|=2^{-n}\sum_{k=0}^n\binom{n}{k}E_{2k}.
$$
On the other hand, Lin--Ma--Ma--Zhou~\cite[Corollary~2.15]{LMMZ} interpreted the $\gamma$-coefficients $\gamma_{n,k}^{(2)}$ appearing in the expansion (see the survey of Athanasiadis~\cite{Ath} on the theme of $\gamma$-positivity)
\begin{equation}\label{gamma:2}
A_n^{(2)}(t)=\sum_{k=0}^{n-1}\gamma_{n,k}^{(2)}t^k(1+t)^{2(n-1)-2k}
\end{equation}
as some class of {\em weakly increasing trees} (see~\cite{LMMZ} for the definition). It follows from the above expansion and Eq.~\eqref{sign:euler2} that
$$
\gamma_{n,n-1}^{(2)}=2^{-n}\sum_{k=0}^n\binom{n}{k}E_{2k}.
$$
Thus, we have the following interesting equinumerosity.

\begin{corollary}\label{cor:trees}
The number of alternating permutations in $\S_n^{(2)}$ equals the number of weakly increasing trees on $\{1,1,2,2,\ldots,n-1,n-1,n\}$ with $n$ leaves and without young leaves.
\end{corollary}

It would be interesting to find a bijective proof of Corollary~\ref{cor:trees}, which would provide an alternative approach to the even case of Conjecture~\ref{conj:Chen3}.

\subsection{Combinatorics of the \texorpdfstring{$\gamma$}{gamma}-positivity of \texorpdfstring{$\tilde A_{2m}(t)$}{}}
The rest of this section is devoted to a group action proof of the $\gamma$-positivity of $\tilde A_{2m}(t)$ that results in a new interpretation of $\gamma_{n,k}^{(2)}$ defined in~\eqref{gamma:2}.

For any $\pi\in\S_{2m}$, note that $|\Dest(\pi)|=\des(\pi)$. Introduce a variant of descents of $\pi$:
$$
\widetilde\des(\pi):=|\Dest(\pi)\setminus\{m+1\}|.
$$
By Lemma~\ref{f:transfor}, we have
$$
\tilde A_{2m}(t)=\sum_{\pi\in\S_{2m}} t^{\widetilde \des(\pi)}.
$$
With the convention $\pi(0)=\pi(2m+1)=+\infty$, for $i\in[2m]$, the letter $\pi(i)$ is called a {\em valley} (resp.~{\em peak})  of $\pi$ if $\pi(i-1)>\pi(i)<\pi(i+1)$ (resp.~$\pi(i-1)<\pi(i)>\pi(i+1)$). Denote by $\Val(\pi)$ and $\Peak(\pi)$ the set of valleys and the set of peaks of $\pi$, respectively.

\begin{theorem}\label{thm:gam}
The polynomial $\tilde A_{2m}(t)$ has the $\gamma$-positive expansion
\begin{equation}
\tilde A_{2m}(t)=\sum_{\pi\in\S_{2m}} t^{\widetilde \des(\pi)}=\sum_{k=0}^{m-1}|\tilde D_{2m,k}|t^k(1+t)^{2m-2-2k},
\end{equation}
where
$$
\tilde D_{2m,k}:=\{\pi\in\S_{2m}: \Ddes(\pi)\setminus\{m+1\}=\emptyset, m+1\notin\Val(\pi)\text{ and $\widetilde\des(\pi)=k$}\}.
$$
\end{theorem}

\begin{example}
For instance,  we have
$$
\tilde D_{4,0}=\{1234,3124,1324,2314\}\text{ and }\tilde D_{4,1}=\{1342,1423,1432,2341,2413,2431,3142,3241\},
$$
and so
$$
\tilde A_{4}(t)=4(1+t)^2+8t.
$$
\end{example}

An immediate consequence of Theorems~\ref{2:eulerian} and~\ref{thm:gam} is the following permutation interpretation of $\gamma_{n,k}^{(2)}$ (see~\cite[Corollary~2.15]{LMMZ} for another interpretation of $\gamma_{n,k}^{(2)}$ in terms of trees).
\begin{corollary}
Let  $\gamma_{n,k}^{(2)}$ be defined in~\eqref{gamma:2}. Then $\gamma_{n,k}^{(2)}=2^{-n}|\tilde D_{2n,k}|$ for $n\geq1$.
\end{corollary}

In order to prove Theorem~\ref{thm:gam}, we need the following interesting equidistribution.

\begin{lemma}\label{bij:fvc}
There exists a bijection $\eta$ preserving the number of descents  between
$$
\mathcal{P}_m:=\{\pi\in\S_{2m}: \Ddes(\pi)=\emptyset, m+1\text{ is a peak}\}
$$
and
$$
\mathcal{V}_m:=\{\pi\in\S_{2m}: \Ddes(\pi)=\emptyset, m+1\text{ is a valley}\}.
$$
\end{lemma}

The proof of the above lemma can be considered as a nice application of the classical  {\em Fran\c{c}on--Viennot bijection}~\cite{Fran} that encodes permutations as Laguerre histories.

Recall that a {\em Motzkin path} of length $n$ is a lattice path in the first  quadrant starting from $(0,0)$, ending at $(n,0)$, and using three possible steps:
 $$
 U=(1,1) \text{ (up step), } L=(1,0) \text{ (level step), and } D=(1,-1) \text{ (down step).}
 $$
A Motzkin path in which each level step is further  distinguished into two different types  $L_0$ (in blue) and $L_1$ (in red) is called a {\em$2$-Motzkin path}.
 Thus, each $2$-Motzkin path can be represented as a word over the alphabet $\{U,D,L_0,L_1\}$. A {\em Laguerre history} of length $n$ is a pair $(w,\mu)$, where $w=w_1\cdots w_n$ is a $2$-Motzkin path and $\mu=(\mu_1,\cdots,\mu_n)$ is a sequence of weights satisfying $0\leq\mu_i\leq h_i(w)$, and $h_i(w)$ denotes  the {\em height} of the starting point of the $i$-th step of $w$. Denote by $\Lag_n$ the set of all Laguerre histories of length $n$.

 Using the convention $\pi(0)=\pi(n+1)=0$, the  Fran\c{c}on--Viennot bijection  $\phi_{FV}:\S_n\rightarrow\Lag_{n-1}$ that we need  is the following modified version (see~\cite{LY}) defined as $\phi_{FV}(\pi)=(w,\mu)\in\Lag_{n-1}$, where for each $k\in[n]$ with $i=\pi(k)\leq n-1$:
$$
  w_i=\left\{
  \begin{array}{ll}
  U& \mbox{if $\pi(k-1)>\pi(k)=i<\pi(k+1)$},  \\
 D&\mbox{if $\pi(k-1)<\pi(k)=i>\pi(k+1)$},  \\
   L_0&\mbox{if $\pi(k-1)<\pi(k)=i<\pi(k+1)$},\\
  L_1 &\mbox{if $\pi(k-1)>\pi(k)=i>\pi(k+1)$},
  \end{array}
  \right.
  $$
  and $\mu_i$ is the number of $\pp$ patterns with $i$ representing the $2$, i.e.,
$$\mu_i=\pp_i(\pi):=|\{j: j-1>k\text{ and } \pi(j-1)<\pi(k)=i<\pi(j)\}|.$$
For example, if $\pi=21637548\in\S_8$, then $\phi_{FV}(\pi)=(w,\mu)$, where $w=UDUUL_1DD$ and $\mu=(0,1,0,0,1,2,1)$; see Fig.~\ref{fig:Theta} below for an illustration, where the $L_1$ step is colored red. It was known that $\phi_{FV}$ is a bijection and the reader is referred to~\cite{LY} for a recursive description of its inverse $\phi_{FV}^{-1}$.

\begin{proof}[{\bf Proof of Lemma~\ref{bij:fvc}}] Recall the complement map $\pi\mapsto \pi^{\mathrm{c}}$ defined in~\eqref{def:comp}. It is convenient to consider the two sets
$$
\mathcal{P}_m^{\mathrm{c}}:=\{\pi^{\mathrm{c}}: \pi\in \mathcal{P}_m\}\quad\text{and}\quad \mathcal{V}_m^{\mathrm{c}}:=\{\pi^{\mathrm{c}}: \pi\in \mathcal{V}_m\}.
$$
Note that the letter $m+1$ in a permutation $\pi\in \mathcal{P}_m$ becomes the letter $m$ in $\pi^{\mathrm{c}}$. We aim to construct a bijection $\eta': \mathcal{P}_m^{\mathrm{c}}\rightarrow \mathcal{V}_m^{\mathrm{c}}$ that preserves the number of ascents of permutations and then set $\eta$ to be the map $\pi\mapsto (\eta'(\pi^{\mathrm{c}}))^{\mathrm{c}}$. To do this, we will introduce an involution $\Theta$ on $\Lag_{2m-1}$.

Let $(w,\mu)\in\Lag_{2m-1}$ be a Laguerre history. For any up step $w_i=U$, there is a unique down step $w_{i'}=D$ to the right of $w_i$, closest to $w_i$,  whose ending point has the same height as the starting point of $w_i$. We call $w_{i'}$ (resp.~$w_i$) the associated down (resp.~up) step of $w_i$ (resp.~$w_{i'}$). For example, for the Laguerre history in the left part of Fig.~\ref{fig:Theta}, the associated down step of $w_4$ is $w_6$ and the remaining two associated pairs are $(w_1,w_2)$ and $(w_3,w_7)$. Now define $\Theta(w,\mu)=(w^*,\mu^*)$, where
$$
w_{2m-i}^*=\begin{cases}
U & \text{if $w_i=D$,}\\
D & \text{if $w_i=U$,}\\
w_i & \text{if $w_i$ is a level step,}
\end{cases}
$$
and
$$
\mu_{2m-i}^*=
\begin{cases}
\mu_{i'} \quad&\text{if $w_i=U$ (resp.~$w_i=D$) whose associated down (resp.~up) step is $w_{i'}$},\\
\mu_{i} &\text{if $w_i$ is a level step}.
\end{cases}
$$
See Fig.~\ref{fig:Theta} for an instance of $\Theta$, where $w=UDUUL_1DD$ and $\mu=(0,1,0,0,1,2,1)$.

\begin{figure}[ht]
\centering
\begin{tikzpicture}[scale=0.28]
\draw[thick] (0,0) to (2,2);\node at (0,0) {$\bullet$};
\draw[thick] (2,2) to (4,0);\node at (4,0) {$\bullet$};\node at (2,2) {$\bullet$};
\draw[thick] (4,0) to (6,2);\draw[thick] (6,2) to (8,4);\node at (6,2) {$\bullet$};
\node at (8,4) {$\bullet$};\draw[-,color=red] (8,4) to (11,4);\node at (11,4) {$\bullet$};
\draw[thick] (11,4) to (15,0);\node at (13,2) {$\bullet$};\node at (15,0) {$\bullet$};
\node at (0.7,1.5) {$0$};\node at (3.3,1.5) {$1$};
\node at (4.8,1.5) {$0$}; \node at (6.6,3.3) {$0$};
\node at (9.5,4.8) {$1$}; \node at (12.4,3.4) {$2$};
\node at (14.4,1.5) {$1$};

%right Motzkin paths
\node at (18,2) {$\longrightarrow$};

\node at (18,3) {$\Theta$};

\draw[dashed] (7,3) to (12,3);

\draw[thick] (21,0) to (25,4);\node at (21,0) {$\bullet$};\node at (23,2) {$\bullet$};\node at (25,4) {$\bullet$};
\draw[-,color=red] (25,4) to (28,4);\node at (28,4) {$\bullet$};
\draw[thick] (28,4) to (32,0);\node at (30,2) {$\bullet$};\node at (32,0) {$\bullet$};
\draw[thick] (32,0) to (34,2);\draw[thick] (34,2) to (36,0);\node at (34,2) {$\bullet$};\node at (36,0) {$\bullet$};

\node at (21.7,1.5) {$0$};\node at (23.7,3.5) {$0$};
\node at (26.5,4.8) {$1$};\node at (29.2,3.5) {$2$};
\node at (31.2,1.5) {$1$};\node at (32.8,1.5) {$0$};\node at (35.3,1.5) {$1$};
\end{tikzpicture}
\caption{The construction of $\Theta$: a red level step represents $L_1$. \label{fig:Theta}}
\end{figure}

It is clear from the construction that $\Theta: (w,\mu)\mapsto (w^*,\mu^*)$ is an involution on $\Lag_{2m-1}$ for which
$$
\text{the $m$-th step of $w$ is an up step}\iff\text{the $m$-th step of $w^*$ is a down step}.
$$
It is routine to check that $\eta':=\phi_{FV}^{-1}\circ\Theta\circ\phi_{FV}$ is a bijection between $\mathcal{P}_m^c$ and $\mathcal{V}_m^c$ preserving the number of ascents.
\end{proof}

\begin{proof}[{\bf Proof of Theorem~\ref{thm:gam}}]  Recall the Foata--Strehl action $\varphi_x$ defined in~\eqref{def:FSact}. Br\"and\'en~\cite{Bra} modified the Foata--Strehl action $\varphi_x$ as
$$
\varphi_x'(\pi)=
\begin{cases}
\varphi_x(\pi),\quad&\text{if $x$ is a double ascent/descent value of $\pi$};\\
\pi, &\text{otherwise}.
\end{cases}
$$
For our purpose, for any $x\in[2m]$ we consider the restricted version of $\varphi_x'$ defined by
 $$
\widetilde\varphi_x(\pi)=
\begin{cases}
\varphi_x'(\pi)\quad&\text{if $x\neq m+1$},\\
\pi \quad&\text{if $x=m+1$},
\end{cases}
$$
 where $\pi\in\S_{2m}$. For our discussions below, the reader is advised to envision the restricted action $\widetilde\varphi_x$ using the so-called valley-hopping interpretation as in Fig.~\ref{FS:action}.

Since $\varphi_x'$'s are involutions on $\S_{2m}$ and they commute, so do $\widetilde\varphi_x$'s. Thus, for any subset $S\subseteq[2m]$, we can define the mapping $\widetilde\varphi_S: \S_{2m}\rightarrow\S_{2m}$ by
$$
\widetilde\varphi_S:=\prod_{x\in S}\widetilde\varphi_x.
$$
This induces a $\Z_2^{2m}$-action on $\S_{2m}$ via the mappings  $\widetilde\varphi_S$ with $S\subseteq[2m]$.
Let $\Orb(\pi)$ be the orbit of $\pi$ under this action. If $x\in[2m]$ is a double descent/ascent value of $\pi$ different from $m+1$, then $x$ becomes a double ascent/descent value of $\widetilde\varphi_x(\pi)$ and so
$$
\widetilde\des(\widetilde\varphi_x(\pi))=
\begin{cases}
\widetilde\des(\pi)+1\quad\text{if $x\neq m+1$ is a double ascent value of $\pi$},\\
\widetilde\des(\pi)-1\quad\text{if $x\neq m+1$ is a double descent value of $\pi$}.
\end{cases}
$$
Therefore, if we use $\hat\pi$ to denote the unique permutation in $\Orb(\pi)$ with $\Ddes(\hat\pi)\setminus\{m+1\}=\emptyset$, then
$$
\sum_{\sigma\in\Orb(\pi)}t^{\widetilde\des(\sigma)}=t^{\widetilde\des(\hat\pi)}(1+t)^{|\Dasc(\hat\pi)\setminus\{m+1\}|}.
$$
Now we need to consider the following two cases.
\begin{itemize}
\item If $m+1$ is a double descent or a double ascent value of $\hat\pi$, then $|\Dasc(\hat\pi)\setminus\{m+1\}|=2m-2-2\widetilde\des(\hat\pi)$ and so in this case
$$
\sum_{\sigma\in\Orb(\pi)}t^{\widetilde\des(\sigma)}=t^{\widetilde\des(\hat\pi)}(1+t)^{2m-2-2\widetilde\des(\hat\pi)}.
$$
\item If $m+1$ is a peak of $\hat\pi$, then $|\Dasc(\hat\pi)\setminus\{m+1\}|=2m-3-2\widetilde\des(\hat\pi)$. On the other hand,  $m+1$ is a valley of $\eta(\hat\pi)$ by Lemma~\ref{bij:fvc}, $\Ddes(\eta(\hat\pi))\setminus\{m+1\}=\emptyset$,
$\widetilde\des(\eta(\hat\pi))=\widetilde\des(\hat\pi)+1$
and $$ |\Dasc(\eta(\hat\pi))\setminus\{m+1\}|=2m-1-2\widetilde\des(\eta(\hat\pi)).
$$
Thus, we have
\begin{align*}
\sum_{\sigma\in\Orb(\pi)\biguplus\Orb(\eta(\hat\pi))}t^{\widetilde\des(\sigma)}&=\sum_{\sigma\in\Orb(\pi)}t^{\widetilde\des(\sigma)}+\sum_{\sigma\in\Orb(\eta(\hat\pi))}t^{\widetilde\des(\sigma)}\\
&=t^{\widetilde\des(\hat\pi)}(1+t)^{2m-3-2\widetilde\des(\hat\pi)}+t^{\widetilde\des(\hat\pi)+1}(1+t)^{2m-3-2\widetilde\des(\hat\pi)}\\
&=t^{\widetilde\des(\hat\pi)}(1+t)^{2m-2-2\widetilde\des(\hat\pi)}.
\end{align*}
\end{itemize}
Combining the above two cases, we obtain the desired $\gamma$-positive expansion for $\tilde A_{2m}(t)$.
\end{proof}

\section{Concluding remarks}

In this paper, we study permanents of the floor function of some fractions and the sign function of some trigonometric functions and establish their intriguing connections with several classical combinatorial sequences. It would be interesting to investigate the combinatorics of permanents of the floor function or the sign function of other elementary functions.

In the course of proving several permanent conjectures, we introduce the crucial  action $\phi_{k,l}$ in Definition~\ref{Def:action} on matrices that preserves  the permanents. For any $S\subseteq[n]\times[n]$, define the transformation matrix $T_S$ with respect to $S$ by
$$
\biggl(\prod_{(k,l)\in S}\phi_{k,l}\biggr)(A)=A\circ T_S,
$$
where $A$ is any $n\times n$ matrix over  $\mathbb{R}$. Let us consider the set of  transformation matrices
$$\mathcal{T}_n:=\{T_S: S\subseteq[n]\times[n]\}.$$
For instance, $\mathcal{T}_2$ consists of four matrices
$$
\begin{bmatrix}
1 & 1 \\
1 & 1
\end{bmatrix},\,\,
\begin{bmatrix}
1 & -1 \\
-1 & 1
\end{bmatrix},\,\,
\begin{bmatrix}
-1 & 1 \\
1 & -1
\end{bmatrix},\,\,
\begin{bmatrix}
-1 & -1 \\
-1 & -1
\end{bmatrix}.
$$

\begin{proposition}\label{char-action}
For any positive integer $n$, the transformation group $\mathcal{T}_n$ is isomorphic to
the group $\Z_2^{2n-2}$, where $\Z_2=\Z/2\Z$. Consequently,
$|\mathcal{T}_n|=2^{2n-2}$.
\end{proposition}

\begin{proof}[{\bf Sketch of the proof}]
Under the Hadamard product, $\mathcal{T}_n$ forms an abelian group whose non-identity elements
always have order $2$. By the well-known Structure Theorem for Finite Abelian Groups,   $\mathcal{T}_n\cong\Z_2^{\ell(n)}$ for some $\ell(n)\in\N$. It remains to show that $\ell(n)=2n-2$. This will be done once we can show that there are exactly $2n-2$ generators of $\mathcal{T}_n$.

For any $S\subseteq[n]\times[n]$, we define $\phi_S:=\prod_{(k,l)\in S}\phi_{k,l}$,
and call  $S$ a {\em kernel} if $\phi_S$ is the identity action. If $S$ is a kernel, then for any $(a,b)\in S$ we have $\phi_{(a,b)}=\phi_{S\setminus\{(a,b)\}}$. Consider the subset $\mathsf{G}_n=\mathsf{L}_n\uplus \mathsf{R}_n$, the disjoint union of
$$
\mathsf{L}_n:=\{(i,i), (i+1,i): 1\leq i\leq \lfloor n/2\rfloor\}\quad\text{and}\quad \mathsf{R}_n:=\{(i-1,i), (i,i): \lfloor n/2\rfloor+1<i\leq n\}.
$$
  Both $\mathsf{L}_n$ and $\mathsf{R}_n$ form (nearly) half of the border strip around the diagonal of the $n\times n$ grid, and $|\mathsf{G}_n|=2n-2$.
See Fig.~\ref{G_n} for examples of $\mathsf{G}_n$ with $n=7,8$.

\begin{figure}[ht]
\begin{center}
\begin{tikzpicture}[scale=.5]

\draw[step=1,color=gray] (0,0) grid (7,7);
\draw[step=1,color=gray] (10,0) grid (18,8);

%L_7
\draw(.5,6.5) node{\blue{\circle*{4}}};\draw(.5,5.5) node{\blue{\circle*{4}}};
\draw(1.5,5.5) node{\blue{\circle*{4}}};\draw(1.5,4.5) node{\blue{\circle*{4}}};
\draw(2.5,4.5) node{\blue{\circle*{4}}};\draw(2.5,3.5) node{\blue{\circle*{4}}};
%R_7
\draw(4.5,3.5) node{\red{\circle*{4}}};\draw(4.5,2.5) node{\red{\circle*{4}}};
\draw(5.5,2.5) node{\red{\circle*{4}}};\draw(5.5,1.5) node{\red{\circle*{4}}};
\draw(6.5,1.5) node{\red{\circle*{4}}};\draw(6.5,.5) node{\red{\circle*{4}}};

%L_8
\draw(10.5,7.5) node{\blue{\circle*{4}}};\draw(10.5,6.5) node{\blue{\circle*{4}}};
\draw(11.5,6.5) node{\blue{\circle*{4}}};\draw(11.5,5.5) node{\blue{\circle*{4}}};
\draw(12.5,5.5) node{\blue{\circle*{4}}};\draw(12.5,4.5) node{\blue{\circle*{4}}};
\draw(13.5,4.5) node{\blue{\circle*{4}}};\draw(13.5,3.5) node{\blue{\circle*{4}}};
%R_8
\draw(15.5,3.5) node{\red{\circle*{4}}};\draw(15.5,2.5) node{\red{\circle*{4}}};
\draw(16.5,2.5) node{\red{\circle*{4}}};\draw(16.5,1.5) node{\red{\circle*{4}}};
\draw(17.5,1.5) node{\red{\circle*{4}}};\draw(17.5,.5) node{\red{\circle*{4}}};

\end{tikzpicture}
\end{center}
\caption{Examples of $\mathsf{G}_n$ for $n=7,8$, where  $\mathsf{L}_n$ and $\mathsf{R}_n$ correspond  respectively to the blue and the red dots.\label{G_n}}
\end{figure}

Now we claim that the elements in $\{\phi_{(a,b)}: (a,b)\in \mathsf{G}_n\}$ form a set of generators for all the actions in $\{\phi_S:S\subseteq[n]\times[n]\}$, which would complete the proof. To prove this, one needs to check:
\begin{itemize}
\item The elements in $\{\phi_{(a,b)}: (a,b)\in \mathsf{G}_n\}$  generate  all $\phi_{(i,j)}$ for $(i,j)\in[n]\times[n]$. To see  this, note that the set $\{(i,j),(i+1,j),(i,j+1),(i+1,j+1)\}$ is a kernel for any $(i,j)\in[n-1]\times[n-1]$ and  the set $\{(1,1),(2,2),\ldots,(n,n)\}$ is  a kernel too.
\item Any nonempty subset of $\mathsf{G}_n$ could not be a kernel. This can be proved by induction on $n$, since $\mathsf G_n$ can be embedded in $\mathsf G_{n+1}$; compare $\mathsf G_7$ and $\mathsf G_8$ in Fig.~\ref{G_n}.
\end{itemize}
The tedious details are left to the interested reader.
\end{proof}
Recall that a polynomial $f(z_1\ldots,z_m)\in\mathbb{R}[z_1,\ldots,z_m]$ is said to be {\em stable}, if
$f(z_1,\ldots,z_m)\not=0$ whenever $z_1,\ldots,z_m\in\{z\in\mathbb{C}:\ \Im(z)>0\}$. It is well known that the
stability of the multivariate generating functions implies that their univariate counterparts, obtained by diagonalization, have only real zeros. By using the theory of stability, Br{\"a}nd{\'e}n,  Haglund, Visontai and Wagner~\cite{bhvw} proved that $\per(zU_n+A)$ is a polynomial in $z$ with only real zeros if $A$ is a matrix $[a_{i,j}]_{1\ls i,j\ls n}$
with $a_{1,j}\gs a_{2,j}\gs\ldots\gs a_{n,j}$ for all $j=1,\ldots,n$. Can the techniques in their paper be employed to find multivariable extension of the variation of the Eulerian polynomials
$
\sum_{\pi\in\S_{n}}z^{\widetilde\exc(\pi)}
$
that possesses certain nice stable property?

An interpretation for the $\gamma$-coefficients $\gamma_{n,k}^{(2)}$, defined in~\eqref{gamma:2}, of the $2$-Eulerian polynomials  in terms of weakly increasing trees on $\{1,1,2,2,\ldots,n-1,n-1,n\}$  without young leaves has been found in~\cite{LMMZ}. Corollary~\ref{cor:trees} asserts that the diagonal coefficient $\gamma_{n,n-1}^{(2)}$ enumerates alternating multipermutations in $\S_n^{(2)}$, which may shed some light on finding an interpretation for $\gamma_{n,k}^{(2)}$ in terms of certain class of permutations on $\{1,1,2,2,\ldots,n,n\}$.

\section*{Acknowledgements}
The authors would like to thank the anonymous referee for making helpful comments that led to a better presentation of this paper. The first, second and third authors were supported by the National Natural Science Foundation of China
(grants 12171059, 12322115 \& 12271301, and 12371004, respectively). The first author was also supported by the Mathematical Research Center of Chongqing University.


\begin{thebibliography}{99}



\bibitem{Andre} D. Andr\'e, D\'eveloppement de $\sec x$ and $\tan x$, C. R. Math. Acad. Sci. Paris {\bf 88} (1879), 965--979.

\bibitem{Ardila} F. Ardila, The bipermutahedron, Comb. Theory {\bf 2}(3) (2022), Paper No.~1, 33 pp.

\bibitem{Ath} C.A. Athanasiadis, Gamma-positivity in combinatorics and geometry, S\'em. Lothar. Combin. 77 (2018), Article B77i, 64pp (electronic).


\bibitem{Big} A.~Bigeni, Combinatorial study of Dellac configurations and $q$-extended normalized median Genocchi numbers, Electron. J. Combin. {\bf 21} (2014), \#P2.32, 27pp (electronic).

\bibitem{Bra} P.~Br\"and\'en, Actions on permutations and unimodality of descent polynomials, European J. Combin. {\bf29} (2008), 514--531.

\bibitem{bhvw}
P. Br{\"a}nd{\'e}n, J. Haglund, M. Visontai, and D. G. Wagner, Proof of the
monotone column permanent conjecture, in Notions of positivity and the
geometry of polynomials, Trends Math., Birkh\"auser/Springer Basel AG, Basel,
2011, 63--78.

\bibitem{Bur} A. Burstein, M. Josuat-Verg\`es and W. Stromquist,
New Dumont permutations, Pure Math. Appl. (PU.M.A.)  {\bf 21} (2010),  177--206.

\bibitem{CH} L. Carlitz and V.E., Jr. Hoggatt, Generalized Eulerian numbers and polynomials, Fibonacci Quart. \textbf{16} (1978), 138--146.

\bibitem{Che1} D.~Chen, Euler numbers and permanent of matrices,
\href{https://mathoverflow.net/questions/402572}{Question 402572} at MathOverflow, Aug. 26, 2021.
https://mathoverflow.net/questions/402572

\bibitem{Che2} D.~Chen, Tangent numbers, secant numbers and permanent of matrices,
\href{https://mathoverflow.net/questions/403336}{Question 403336} at MathOverflow, Sep. 6, 2021.
https://mathoverflow.net/questions/403336

\bibitem{Che3} D.~Chen, Permanents and Kummer-like congruence,
\href{https://mathoverflow.net/questions/404768}{Question 404768} at MathOverflow, Sep. 25, 2021.
https://mathoverflow.net/questions/404768

\bibitem{du} D. Dumont, Interpr\'etations combinatoires des numbers de Genocchi (in French), Duke Math. J. {\bf 41} (1974), 305--318.

\bibitem{Eul} L.~Euler, Institutiones calculi defferentialis cum eius usu in analysi finitorum ac Doctrina serierum, Academiae Imperialis Scientiarum Petropolitanae, St. Petersbourg, 1755, Chap. VII (``Methodus summandi superior ulterius promota'').

\bibitem{Fei} E.~Feigin, Degenerate flag varieties and the median Genocchi numbers, Math. Res. Lett. {\bf 18} (2011), no.~6, 1163--1178.

\bibitem{Fran}  J. Fran\c{c}on and G. Viennot,  Permutations selon leurs pics, creux, doubles mont\'ees et double descentes, nombres d'Euler et nombres de Genocchi (in French),  Discrete Math. {\bf 28} (1979), 21--35.

\bibitem{FH} D.~Foata and G.-N.~Han, Finite difference calculus for alternating permutations, J. Difference Equ. Appl. {\bf 19} (2013), 1952--1966.

\bibitem{FS} D.~Foata and M.-P.~Sch\"utzenberger, Th\'eorie g\'eom\'etrique des polyn\^omes eul\'eriens, Lecture Notes in Mathematics, Vol. 138, Berlin, Springer-Verlag, 1970.

\bibitem{FSt} D.~Foata and V.~Strehl, Rearrangements of the symmetric group and enumerative properties of the tangent and secant numbers, Math. Z. {\bf 137} (1974), 257--264.

\bibitem{Ges} I.M. Gessel, Symmetric functions and $P$-recursiveness, J. Combin. Theory Ser. A {\bf 53} (1990), 257--285.

\bibitem{Han} G.-N. Han and J.-Y. Liu,
Combinatorial proofs of some properties of tangent and Genocchi numbers,
European J. Combin. {\bf 71} (2018), 99--110.

\bibitem{HZ} G.-N.~Han and J.~Zeng, On a $q$-sequence that generalizes the median Genocchi numbers, Ann. Sci. Math. Qu\'ebec {\bf 23} (1999), 63--72.

\bibitem{Kre} G.~Kreweras, Sur les permutations compt\'ees par les nombres de Genocchi de 1-\`ere et 2-\`eme esp\`ece, European J. Combin. {\bf 18} (1997), 49--58.

\bibitem{LMMZ} Z. Lin, J. Ma, S.-M. Ma, Y. Zhou, Weakly increasing trees on a multiset, Adv. in Appl. Math. {\bf 129} (2021), Article 102206.

\bibitem{LY} Z. Lin and S.H.F. Yan, Cycles on a multiset with only even-odd drops, Discrete Math. {\bf 345} (2022), Article 112683.

\bibitem{LZ} Z. Lin and J. Zeng, The $\gamma$-positivity of basic Eulerian polynomials via group actions, J. Combin. Theory Ser. A {\bf 135} (2015), 112--129.

\bibitem{Macm} P.A. MacMahon, {\em Combinatory analysis}, Two volumes (bound as one), Chelsea Publishing Co., New York, 1960.


\bibitem{Prod} H. Prodinger, On Touchard's continued fraction and extensions: combinatorics-free, self-contained proofs, Quaest. Math. {\bf 35} (2012),  431--445.

\bibitem{Ros} D.P.~Roselle, Permutations by number of rises and successions, Proc. Amer. Math. Soc. {\bf 19} (1968), 8--16.

\bibitem{oeis}
N.J.A.~Sloane,  The On-Line Encyclopedia of Integer Sequences,  \href{https://oeis.org}{https://oeis.org}.


\bibitem{St} R.P. Stanley, {\it Enumerative Combinatorics, Vol. 1}, in: Cambridge Studies in Advanced Mathematics, vol. 49, Cambridge University Press, Cambridge, 1997.

\bibitem{SW} H.~Sun and Y.~Wang, A group action on derangements, Electron. J. Comb. {\bf 21} (2014),  \#P1.67, 5 pp (electronic).

\bibitem{S18} Z.-W. Sun, Permutations of $\{1,\ldots,n\}$ and related topics,
J. Algebraic Combin. {\bf 54} (2021), 893--912.

\bibitem{Sun} Z.-W. Sun, Arithmetic properties of some permanents, preprint, \href{https://arxiv.org/abs/2108.07723}{arXiv:2108.07723}.

\bibitem{Sun-quest} Z.-W. Sun, A novel identity connecting permanents to Bernoulli numbers,
\href{https://mathoverflow.net/questions/403386/a-novel-identity-connecting-permanents-to-bernoulli-numbers}{Question 403386} at MathOverflow, Sep. 7, 2021.

\bibitem{S19} Z.-W. Sun, On some determinants involving the tangent function, Ramanujan J.
{\bf 64} (2024), 309--332.

\bibitem{Tou} J. Touchard, Sur un probl\`eme de configurations et sur les fractions continues, Canad. J. Math. {\bf 4} (1952), 2--25.



\end{thebibliography}
\end{document}